\DeclareMathOperator{\spn}{span}
\newtheorem{thm}{Theorem}[section]
 \newtheorem{lem}[thm]{Lemma}
 \newtheorem{prop}[thm]{Proposition}
 \numberwithin{equation}{section}
\begin{document}

\title{Sharp estimates for the covering numbers of the Weierstrass fractal kernel%\thanks{Grants or other notes
%about the article that should go on the front page should be
%placed here. General acknowledgments should be placed at the end of the article.}
}
%\subtitle{Do you have a subtitle?\\ If so, write it here}

%\titlerunning{Short form of title}        % if too long for running head

\author{D. Azevedo \and
        K. Gonzalez \and
        T. Jord\~{a}o
}

%\authorrunning{Short form of author list} % if too long for running head

\institute{D. Azevedo \at
\email{douglasa@utfpr.edu.br}\\
DAMAT- Universidade Tecnológica Federal do Paraná, Brazil.                     
\and
K. Gonzalez \at
\email{karina.navarro@correounivalle.edu.co}\\ ICMC - Universidade de S\~{a}o Paulo, Brazil.
\and
T. Jord\~{a}o \at
\email{tjordao@icmc.usp.br}\\
Tel.: +55-16-3373-713\\ICMC - Universidade de S\~{a}o Paulo.\\
Av. Trabalhador saocarlense, 400, São Carlos, S\~{a}o Paulo, Brazil, 13566-590. 
}

\date{Received: date / Accepted: date}
% The correct dates will be entered by the editor

\maketitle

\begin{abstract}
In this paper we use the infamous continuous and nowhere differentiable Weierstrass function as a prototype to define a ``Weierstrass fractal kernel". We  investigate  properties of the reproducing kernel Hilbert space (RKHS) associated to this kernel by  presenting an explicit  characterization of this space. In particular, we show that this space has a dense subset composed of continuous but nowhere  differentiable functions. Moreover, we present sharp estimates for the covering numbers of the unit ball of this space as a subset of the continuous functions.
\keywords{Covering numbers \and Weierstrass fractal kernel \and Fourier series expansion \and reproducing kernel Hilbert space \and continuous nowhere differentiable functions.}
% \PACS{PACS code1 \and PACS code2 \and more}
\subclass{Primary 47B06 - 42A16\and Secondary 46E22 - 26A15 - 42A32.}
\end{abstract}

\section{Introduction}
\label{intro}
In 1872 K. Weierstrass presented a particular class of trigonometric series as a collection of continuous but nowhere differentiable functions (CNDF).\ These functions are defined in terms of the following Fourier series expansion
\begin{equation}\label{1.1} 
w_{a,b} (x)= \sum_{n=0}^{\infty} a^n \cos(b^n \pi  \,x),\,\,\,x\in\mathbb{R}.
\end{equation}
%where the symbol $\sim$ means that the series expansion converges almost everywhere in $\mathbb{R}$.
 For $0<a<1$ it is clear that it defines a continuous bounded function.\ Under this assumption, Weierstrass proved  that $w_{a,b}$ is nowhere differentiable provided that $ab\geq 1+\frac{3\pi}{2}$,  with $b$ an odd integer (\cite{JP,johsen}).\ G.H. Hardy relaxed this condition in \cite{hardy} by showing that  for $ab\geq 1$, the Weierstrass function is nowhere differentiable.

The Weierstrass function is an inspiration for the Weierstrass-Mandelbrot curve, a fractal curve widely explored in fractal geometry (\cite{BL}).\ It is well known that the graph of $w_{a,b}$ is also a fractal curve.\ In particular, for every $b\geq 2$ integer, there exists  $a_b$ in the open interval $(1/b, 1)$ such that the Hausdorff dimension of the graph of $w_{a,b}$ is equal to $D=2+ \log(a)/\log(b)$, for every $a\in (a_b,1)$ (see \cite{BBR}).\ Many examples of CNDF are known, mainly in fractal theory and applications, and the Weierstrass function is definitely one of the most prominent.\ The function $M:\mathbb{R}\to \mathbb{C}$
given by
\begin{equation}\label{Mand}
    M(x)= \sum_{n=0}^{\infty}r^{-Kn}e^ {ir^{n}x},
    \,\,x\in \mathbb{R},
\end{equation}
where $r>1$ and $0<K<1$, originally investigated  by Mandelbrot (\cite[p.388]{Mand}) 
is an  object of interest in the study of fractional Brownian motion (FBM) which is intermediated by  a stochastic fractal function characterized by its statistical behavior.\ For $0<a<1$ and $ab\geq1$, the function
%(\cite{PT})
\begin{equation}\label{Wcomplex}
    w_{a,b,\mathbb{C}}(x)=\sum_{n=0}^{\infty}a^{n}e^ {ib^n x}, \quad x\in\mathbb{R},
\end{equation}
has the Weierstass function $w_{a,b}$ as its real part.\ In \cite{PT}, among other things,  %is investigated in order 
it is  shown that  $w_{a,b,\mathbb{C}}$
%to show that it 
can be modified and randomized to approximate a FBM.\ We suggest \cite{BL} and references quoted for more details of the Weierstrass-Mandelbrot curve and its fractal properties.

The CNDF class also have important topological properties in $C([a,b])$. In fact, as proved independently by Banach and Mazurkiewicz  (\cite{Oxtoby}),  it follows a consequence of Baire category theorem  that the class of  CNDF  on $[a,b]$ is of the second category in $C([a,b])$.
%\ But, Banach and Mazurkiewicz in a independent form (\cite{Oxtoby})
%proved that the class of CNDF is of the second category in $C([a,b])$.
\ Here, as usual, $C([a,b])$ stands for the normed real vector space of  real-valued continuous functions on $[a, b]$ with the supremum norm. 

Let $I=[-1,1]$. We consider here $W: I\times I\longrightarrow \mathbb{R}$, 
the `Weierstrass Fractal kernel', 
defined for $0<a<1$, $b$ an integer such that $ab\geq 1$, and given by
\begin{equation}\label{WK}
W(x,y):=w_{a,b}(x-y), \quad x,y\in I,
\end{equation}
where $w_{a,b}$ is the Weierstrass function (\ref{1.1}).\ This is a continuous, nowhere differentiable, symmetric and positive definite kernel. \ The theory of RKHS tell us  that %It means that
there exists only one RKHS $\mathcal{H}_W :=\mathcal{H}_W (I)$  having the Weierstrass Fractal kernel as reproducing kernel.\ As  it is detailed on the next section,  we present a  complete characterization of the space $\mathcal{H}_W$, given  in terms of Fourier series expansions.\ 

It is 
%highly 
expected that the functions in a RKHS 
%possess 
%the same 
%intrinsic 
inherit
some of the properties of the generating kernel 
such as  smoothness.
%property is one of them.
\ In \cite{jordao1}
%\begin{color}{red}\footnote{\begin{color}{red}Eu sei que auto-cita\c{c}\~{a}o \'{e} chato! Mas este trabalhos, publicados em excelentes jornais, contento um contexto hist\'{o}rico sobre a atua\c{c}\~{a}o nossa como grupo de pesquisa, em estrutura similar a que estamos propondo, pode dar mais seguran\c{c}a ao senhor/a referee e editor/a. \end{color}}\end{color} 
the authors showed that functions in the RKHS associated to a  kernel having a Mercer series expansion (it means a Fourier-like series given in terms of tensorial product of orthonormal functions) have the same ``degree" of smoothness in terms of the fractional Laplace-Beltrami derivative of the original kernel.\ More general than that, in \cite{jordao2} similar harmonic analysis structure was enough to prove similar reproducing results on a very general context.\ 

%In the setting we are  dealing with, we present   sufficient conditions for functions in the RKHS of the Weierstrass Fractal kernel to reproduce its nowhere differentiability property.
\ Reproducing kernel Hilbert spaces (RKHS) are in the formulation of many problems in Approximation Theory, Signal
Analysis, Learning Theory, Geomathematics, etc.\ They comprise an interesting, relevant and useful class of
function spaces in these areas of mathematics and many others.\ The metric structure they carry is an important tool in theoretical and practical aspects, prevailing
the manner in the implementation of procedures in the problems where
they appear.\ We mention references \cite{berlinet,SC,CZ,saitoh,saitoh1}
for a general discussion on RKHS, including potential applications.

 In this paper we present upper and lower estimates for the covering numbers of the embedding  $I_W : \mathcal{H}_W \rightarrow C(I)$  
achieving tight bounds.\ The technique 
applied is similar to the one applied in \cite{Azevedo}
for the covering  numbers of the embedding of the unit ball of the RKHS associated to isotropic kernels on compact two-point homogeneous space.  In short,  here, the approach 
is mainly based on operator norm estimate 
of $I_{K}$ and some others  related finite rank operators. 
In \cite{Kuhn} this approach was employed to obtain estimates for the covering numbers of the embedding operator over the RKHS associated to  the  Gaussian kernel over non-empty interior sets of $\mathbb{R}^d$.
%on 
%the related projections and the connection with approximation numbers. \ 
\ The generality of the concept of metric entropy, includes covering number, and it has several applications in many branches of Mathematics, e.g., probability (\cite{Li}), PDEs (\cite{Edmunds}) and operator theory (\cite{Konig}).\ We highlight the important role played by covering number in statistical learning theory to estimate the probabilistic error and the number of samples required for a given confidence level and a given error bound (\cite{SC}).\ We suggest \cite{StCh,zhou1} and references therein for more information on metric entropy and machine learning methods.\ Nevertheless, only for a few infinite-dimensional spaces there has been success in  determining the precise asymptotics of  covering numbers.

If $A$ is a  subset of a metric space $M$ and $\epsilon>0$, the covering number 
\begin{equation}\label{covnum}
\mathcal{C} (\epsilon, A) := \mathcal{C} (\epsilon,A,M)
\end{equation}
is the minimal number of balls in $M$ of radius $\epsilon$ covering the set $A$.\ Clearly, $\mathcal{C}(\epsilon, A) <\infty$, whenever $A$ is a compact subset of $M$.\ For  $X,Y$ Banach spaces and $T: X \rightarrow Y$ an operator the covering numbers are defined in terms of unit balls as follows.\ For $\epsilon>0$, if $B_X$ and $B_Y$ are the unit balls in X and $Y$, respectively, then the covering numbers of $T$ are 
\begin{equation}\label{covnumop}
\mathcal{C} (\epsilon, T):= \mathcal{C}(\epsilon,T(B_X ), Y),
\end{equation}
and given by
 \[
 \mathcal{C} (\epsilon, T)
= \textrm{min} \left\{ n\in \mathbb{N} : \exists y_1, y_2,...,y_n \in Y \,\,\, \mbox{s.t.}\,\,\, T (B_X) \subset \bigcup_{j=1}^{n} (y_j + \epsilon B_Y ) \right\} .\]
Our main goal is to  investigate   the covering numbers of the  embedding $I_W : \mathcal{H}_W \rightarrow {C}(I)$, where
$\mathcal{H}_W$ is the reproducing kernel Hilbert space associated to $W$ defined in equation (\ref{WK}).\ The statement of the main result of this paper will employ the following notations.\ 

Throughout the paper, for functions $f,g: (0,\infty) \rightarrow \mathbb{R}$   we use 
the 
%following 
standard asymptotic notations 
%are considered 
\[f(\epsilon) \approx g(\epsilon)\quad \mbox{means}\quad \lim_{\epsilon\rightarrow 0} \frac{f(\epsilon)}{g(\epsilon)}=1\]
and \[ f(\epsilon)\asymp g(\epsilon) \quad \mbox{stands for }\quad 0< \lim_{\epsilon\rightarrow 0} \inf \frac{f(\epsilon)}{g(\epsilon)} \leq \lim_{\epsilon\rightarrow 0} \sup \frac{f(\epsilon)}{g(\epsilon)} < \infty.\]
In particular, for sequences $\{a_k \}$ and $\{b_k \}$ the asymptotic notation $a_k \asymp b_k $ indicate that there is $M,N\geq 0$ such that  $M b_k \leq a_k \leq N b_k$, for all $k$. 

The main result of this paper reads as follows.

 \begin{thm}\label{coveringesatimates} Let $W$ as in \eqref{WK}.   
The covering numbers of the embedding $I_{W}:\mathcal{H}_W \rightarrow C(I)$ behave asymptotically as follows 
 \[\ln(\mathcal{C}(\epsilon, I_W ))\asymp \frac{\left[ \ln \left(1/\epsilon (1-a)^{1/2}\right) \right]^2}{\ln\left(1/a\right)},  \quad \mbox{as}\quad \epsilon \rightarrow 0^+.\]
\end{thm}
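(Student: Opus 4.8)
The plan is to reduce the covering-number estimate to a diagonal problem. From the characterization of $\mathcal{H}_W$ promised in Section~2, the Weierstrass fractal kernel has a Mercer-type expansion whose eigenfunctions are the trigonometric characters $e_k(x)=e^{ik\pi x}$ (or the real pair $\cos,\sin$) on $I$, and whose eigenvalues are governed by the Fourier coefficients of $w_{a,b}$. Since $w_{a,b}(t)=\sum_{n\ge 0}a^n\cos(b^n\pi t)$, the nonzero Fourier coefficients sit exactly at the frequencies $b^n$ and equal $a^n$; thus the eigenvalues of the integral operator, arranged in decreasing order, are (up to normalization) $\lambda_n=a^n$, each with fixed multiplicity, and $\mathcal{H}_W$ is isometric to a weighted $\ell^2$-space with weights $a^{-n}$. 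The factor $(1-a)^{1/2}$ in the statement is the $L^2$-normalization constant $\|w_{a,b}\|_2\asymp(1-a)^{-1/2}$ coming from $\sum a^{2n}=(1-a^2)^{-1}$, so it is natural to track constants carefully and keep $(1-a)^{1/2}$ visible.

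Next I would identify $I_W$ with the embedding of a weighted $\ell^2$-ball into the sup-norm. Concretely, $I_W(B_{\mathcal{H}_W})$ is (isomorphic to) the set of trigonometric series $\sum_n c_n a^{n}\, e_{b^n}$ with $\sum_n|c_n|^2\le 1$, viewed inside $C(I)$. Upper bound: truncate at level $N$. The tail $\sum_{n>N} c_n a^n e_{b^n}$ has sup-norm at most $\sum_{n>N}a^n = a^{N+1}/(1-a)$, so choosing $N=N(\epsilon)$ with $a^{N}\asymp \epsilon(1-a)$ makes the tail negligible; the head is a finite-rank operator into $C(I)$ of rank $\asymp N$ whose image is contained in a ball whose radius is controlled by the operator norm $\|I_W\|\asymp(1-a)^{-1/2}$ and, more usefully, by the scaled weights. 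Covering a $d$-dimensional ball of radius $R$ in sup norm to accuracy $\epsilon$ costs $(CR/\epsilon)^{d}$ balls, so $\ln\mathcal{C}(\epsilon,I_W)\lesssim N\ln(CR/\epsilon)$; with $N\asymp \ln(1/\epsilon(1-a)^{1/2})/\ln(1/a)$ and $\ln(R/\epsilon)\asymp \ln(1/\epsilon(1-a)^{1/2})$ one gets the claimed $[\ln(1/\epsilon(1-a)^{1/2})]^2/\ln(1/a)$ upper bound. This is exactly the Kühn-type two-step argument (finite-rank approximation plus volumetric covering of the head) referenced via \cite{Kuhn,Azevedo}.

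For the lower bound I would exhibit a large well-separated subset of $I_W(B_{\mathcal{H}_W})$. Keeping the first $N$ coordinates and using that the characters $e_{b^n}$ with distinct $n$ are "almost orthogonal" in sup norm — more precisely that for $\{0,1\}$-combinations $\sum_{n\le N}\delta_n a^n e_{b^n}$ one has, by evaluating at a suitable point (e.g.\ $x=0$ gives $\sum \delta_n a^n$, and lacunarity $b^n$ lets one localize), a lower bound on the sup norm proportional to the largest included term — one produces $2^{cN}$ points at mutual sup-distance $\asymp a^{N}/(1-a)\asymp\epsilon$, each lying in $B_{\mathcal{H}_W}$ after rescaling by a constant. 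A Varshamov–Gilbert / volumetric packing argument then yields $\ln\mathcal{C}(\epsilon,I_W)\gtrsim N\cdot N\asymp[\ln(1/\epsilon(1-a)^{1/2})]^2/\ln(1/a)$: one factor $N$ from the number of usable frequencies, the other from the number of amplitude levels one can resolve. I expect the \emph{main obstacle} to be the lower bound's sup-norm separation estimate: unlike the $L^2$ (Hilbert) norm, the $C(I)$ norm of a lacunary trigonometric polynomial is not simply its coefficient $\ell^2$-norm, so one must exploit the lacunarity $b^n$ (Sidon-set behaviour) to show that a random $\pm1$ choice of signs produces a polynomial whose sup norm is comparable to $\sum a^n$, and then combine this with a careful packing at multiple scales; getting the constants to line up so that the same $N(\epsilon)$ works for both bounds, and so that the $(1-a)^{1/2}$ appears in the right place, is the delicate point.
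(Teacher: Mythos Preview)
Your upper bound is essentially the paper's argument: split $I_W=I_WP_{\mathcal U}^N+I_WP_{\mathcal V}^N$, choose $N$ so that the tail operator has norm at most $\epsilon$, and cover the rank-$N$ head by the volumetric bound $\mathcal C(\epsilon,T)\le(1+2\|T\|/\epsilon)^{\mathrm{rank}\,T}$. One slip: in the RKHS parametrization the coefficients carry $a^{n/2}$, not $a^n$ (the eigenvalues of the integral operator are $a^n$, but functions in the unit ball are $\sum c_n a^{n/2}e_{b^n}$ with $\sum|c_n|^2\le1$); correspondingly the tail has sup norm at most $(a^N/(1-a))^{1/2}$ via Cauchy--Schwarz, not $a^{N+1}/(1-a)$. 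This does not change the asymptotics, and the factor $(1-a)^{-1/2}$ is simply $\|I_W\|=\sup_x W(x,x)^{1/2}$, not an $L^2$ normalization.

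The lower bound is where you diverge from the paper, and the obstacle you flag is one you have created for yourself. The paper does \emph{not} produce a sup-norm packing via Sidon/lacunarity. Instead it observes that $\|\cdot\|_2\le\|\cdot\|_\infty$ on $I$, so it suffices to bound $\mathcal C(\epsilon,I_W)$ from below for the embedding into $L^2(I)$. There the system $\{\cos(b^n\pi\cdot),\sin(b^n\pi\cdot)\}$ is genuinely orthonormal, so the finite-dimensional operator $T_n\colon E_n\hookrightarrow\mathcal H_W\xrightarrow{I_W}L^2(I)\twoheadrightarrow F_n$ between $n$-dimensional spaces has $\det(T_n^*T_n)=1$, and the volume lemma $\mathcal C(\epsilon,T_n)\ge|\det\sqrt{T_n^*T_n}|\,\epsilon^{-n}$ (together with $\mathcal C(\epsilon,I_W)\ge\mathcal C(\epsilon,T_n)$) gives $\ln\mathcal C(\epsilon,I_W)\ge n\ln(1/\epsilon)$ immediately. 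Choosing $n\asymp\ln(1/\epsilon(1-a)^{1/2})/\ln(1/a)$ finishes. Your Varshamov--Gilbert sketch, as written, would in fact only yield $\ln\mathcal C\gtrsim N$ from $2^{cN}$ sign patterns; to recover the second factor you would need a multi-level grid and then the Sidon constant of $\{b^n\}$ to control sup-norm separations, which is doable but entirely unnecessary once you pass to $L^2$.
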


The paper is organized as follows.\ In Section \ref{WKer} we introduce the Weierstrass fractal kernel and derive some properties.\ In Section \ref{WRKHS} we give the characterization of the functions in RKHS associated to $W$ in trigonometric series expansions.\ Section 4 is subdivided in three subsections, for the upper bounds, for the lower bounds and proof of the main theorem, respectively.\ We close the paper with some concluding remarks in Section \ref{CRemarks}.

\section{The Weierstrass fractal kernel}
\label{WKer}
In this section we bring some necessary information and properties about the Weierstrass fractal kernel defined on $\mathbb{R}\times \mathbb{R}$.\ We consider $W$ as in formula (\ref{WK}) but for $x,y\in\mathbb{R}$ and we present some properties of the kernel that will be useful to obtain the estimates of the covering numbers of the RKHS associate to $W$.\  

Let $X$ be a non-empty set.\ A symmetric function $K: X \times X \rightarrow \mathbb{R}$, is called a positive-definite kernel %(\begin{color}{red}refindi\end{color}) 
if it satisfies 
$$\sum_{i=1}^{n}\sum_{j=1}^{n} c_i c_j K(x_i,x_j ) \geq 0$$
for all $n\geq 2, \{ x_1 , x_2 ,...,x_n \}\subset X$ and $\{c_1 ,c_2 ,...,c_n \}\subset \mathbb{R}$.\ For a symmetric positive-definite kernel $K:X \times X \rightarrow \mathbb{R}$ there exists a (unique) Hilbert space 
$$
\mathcal{H}_K:=\left( \mathcal{H}_K (X) , \langle \,\cdot \, ,\,\cdot \, \rangle_K \right)
$$
of real valuable function on $X$, such that the point evaluation linear functional, i.e.,  
$$
I_x (f) := f(x), \quad f\in \mathcal{H}_K
$$
for each $x\in \mathbb{R}$, is continuous.\ The Riez representation theorem implies that $I_x$  is characterized as follows 
$$
I_x (f)= \langle f, K(x, \,\cdot \,)\rangle_K, \quad f\in \mathcal{H}_K, \quad x\in \mathbb{R}. 
$$
In particular, $\mathcal{H}_K$ is the closure of $\spn\{ K(y, \,\cdot \,) : y \in I\}$.\ All these information and more details about the reproducing kernel Hilbert spaces 
(also known as Native spaces) can be found in \cite{aron,CZ} and others quoted there for a general discussion.   

\begin{lem}\label{LEMAPDW}
The function $W: \mathbb{R}\times \mathbb{R}\rightarrow \mathbb{R}$ is a continuous and positive definite kernel.
\end{lem}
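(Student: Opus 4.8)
The plan is to verify, one at a time, the three defining features of a continuous positive definite kernel — continuity, symmetry, and positive definiteness — each of which follows from elementary properties of the trigonometric series defining $w_{a,b}$, so that no deep input is needed.

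First I would establish continuity. Since $0<a<1$, the estimate $|a^n\cos(b^n\pi t)|\le a^n$ together with the Weierstrass $M$-test shows that the series $\sum_{n\ge 0}a^n\cos(b^n\pi t)$ converges uniformly on $\mathbb{R}$, hence $w_{a,b}$ is continuous on $\mathbb{R}$. As the map $(x,y)\mapsto x-y$ is continuous from $\mathbb{R}\times\mathbb{R}$ to $\mathbb{R}$, the composition $W(x,y)=w_{a,b}(x-y)$ is continuous on $\mathbb{R}\times\mathbb{R}$. Symmetry is immediate: $\cos$ is even, so $w_{a,b}$ is even, and therefore $W(x,y)=w_{a,b}(x-y)=w_{a,b}(y-x)=W(y,x)$.

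For positive definiteness I would apply the angle subtraction formula, $\cos(b^k\pi(x-y))=\cos(b^k\pi x)\cos(b^k\pi y)+\sin(b^k\pi x)\sin(b^k\pi y)$, to obtain, for any $n\ge 2$, any $x_1,\dots,x_n\in\mathbb{R}$ and any $c_1,\dots,c_n\in\mathbb{R}$,
\[
\sum_{i=1}^{n}\sum_{j=1}^{n} c_i c_j W(x_i,x_j)=\sum_{k=0}^{\infty} a^k\left[\Big(\sum_{i=1}^{n} c_i\cos(b^k\pi x_i)\Big)^2+\Big(\sum_{i=1}^{n} c_i\sin(b^k\pi x_i)\Big)^2\right]\ge 0 .
\]
Interchanging the finite double sum with the infinite series is justified because the terms are dominated by $2a^k\big(\sum_{i}|c_i|\big)^2$, which is summable; since each summand on the right is nonnegative, the asserted inequality follows.

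The argument has no genuine obstacle; the only point needing a line of care is the rearrangement of the sums, and — if one prefers the structural viewpoint — the observation that each term $a^k\cos(b^k\pi x)\cos(b^k\pi y)$, and likewise with sines, is a nonnegative multiple of a kernel of the form $\varphi(x)\varphi(y)$, so that $W$ is a uniformly convergent sum of positive definite kernels and hence positive definite. This structural description is also what will be exploited in the next section to characterize $\mathcal{H}_W$.
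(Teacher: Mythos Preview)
Your proof is correct and follows essentially the same approach as the paper: uniform convergence via the $M$-test for continuity, and the cosine subtraction formula to rewrite the quadratic form as a nonnegative series of squares for positive definiteness. You add the explicit verification of symmetry and a justification for interchanging the sums, but the core argument is identical.
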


%\textbf{Lemma 1.1} The function $W: \mathbb{R}\times \mathbb{R}\rightarrow \mathbb{R}$ is a continuous and positive definite function.

\begin{proof}
Since $0<a<1$ the trigonometric series defining $W$ is uniformly convergent and it is clear that $W$ is continuous.\ In order to show that $W$ is a positive definite kernel on $\mathbb{R}\times \mathbb{R}$ we observe that for any $k>1$ integer, and $c_1, \ldots , c_k$ and 
 $x_1,x_2,\cdots,x_k $ real numbers, we have that
\begin{align*}
\sum_{i=1}^{k}\sum_{j=1}^{k}c_{i}c_{j}W(x_{i},x_{j})&= 
\sum_{i=1}^{k}\sum_{j=1}^{k}c_{i}c_{j}\sum_{n=0}^{\infty} a^n \cos(b^n \pi (x_i -y_j )),
%&=&\sum_{n=0}^{\infty} a^n \sum_{i=1}^{k}\sum_{j=1}^{k}c_{i}c_{j}\cos(b^n \pi (x_{i}-x_{j}))\\
%&=&\sum_{n=0}^{\infty} a^n \sum_{i=1}^{k}\sum_{j=1}^{k}c_{i}c_{j}(\cos(b^n \pi x_{i})\cos(b^n \pi x_{j})+\sin(b^n \pi x_{i})\sin(b^n \pi x_{j}))\\
%&=&\sum_{n=0}^{\infty} a^n \left\{\sum_{i=1}^{k}c_{i}\cos(b^n \pi x_{i})\sum_{j=1}^{k}c_{j}\cos(b^n \pi x_{j})+\sum_{i=1}^{k}c_{i}\sin(b^n \pi x_{i})\sum_{j=1}^{k}c_{j}\sin(b^n \pi x_{j}))\right\}\\
\end{align*}
ans this last summation is:
\begin{align*}
\sum_{n=0}^{\infty} a^n \left\{\left(\sum_{i=1}^{k}c_{i}\cos(b^n \pi x_{i})\right)^2 +\left(\sum_{i=1}^{k}c_{i}\sin(b^n \pi x_{i})\right)^2\right\}.
\end{align*}
Therefore $\sum_{i=1}^{k}\sum_{j=1}^{k}c_{i}c_{j}W(x_{i},x_{j}) \geq 0$ and $W$ is positive definite.
\end{proof}

First, we begin by showing that $W$ is  continuous and nowhere differentiable kernel.

\begin{lem} \label{L1}
The Weierstrass fractal kernel on $\mathbb{R}\times \mathbb{R}$ is nowhere differentiable.
\end{lem}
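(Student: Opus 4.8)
The plan is to reduce differentiability of the two-variable kernel $W$ to the classical nowhere differentiability of the one-variable Weierstrass function $w_{a,b}$. By definition $W(x,y)=w_{a,b}(x-y)$, so $W$ is differentiable at a point $(x_0,y_0)$ if and only if $w_{a,b}$ is differentiable at the point $x_0-y_0$: indeed, directional derivatives of $W$ along the diagonal direction $(1,-1)$ at $(x_0,y_0)$ are precisely $\pm w_{a,b}'(x_0-y_0)$, and along $(1,1)$ they vanish. So differentiability of $W$ at $(x_0,y_0)$ forces $w_{a,b}$ to be differentiable at $x_0-y_0$. Since the hypothesis $ab\geq 1$ with $0<a<1$ is exactly Hardy's condition under which $w_{a,b}$ is nowhere differentiable on $\mathbb{R}$ (as recalled in the Introduction, citing \cite{hardy}), there is no such point, and hence $W$ is nowhere differentiable on $\mathbb{R}\times\mathbb{R}$.

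The key steps, in order, are: first, observe that the map $\Phi(s,t)=(x_0+s,y_0+t)$ composed with $W$ gives $W(x_0+s,y_0+t)=w_{a,b}((x_0-y_0)+(s-t))$, so fixing $s=-t=h$ one gets $W(x_0+h,y_0-h)=w_{a,b}((x_0-y_0)+2h)$; second, note that if $W$ were (totally, or even just partially/directionally) differentiable at $(x_0,y_0)$, then $h\mapsto W(x_0+h,y_0-h)$ would be differentiable at $h=0$, which would give differentiability of $w_{a,b}$ at $x_0-y_0$; third, invoke Hardy's theorem that $w_{a,b}$ is nowhere differentiable when $ab\geq 1$; conclude that $(x_0,y_0)$ was arbitrary, so $W$ is nowhere differentiable. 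One should also record (for continuity) that $W$ is continuous, which is immediate from uniform convergence of the defining series and was already established in Lemma \ref{LEMAPDW}.

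The only real subtlety — and the step I expect could require the most care in the write-up — is pinning down exactly which notion of "differentiable" is meant for a two-variable function here (Fréchet differentiability, existence of both partials, or existence of all directional derivatives) and making sure the reduction to $w_{a,b}$ works for that notion. The cleanest route is to prove the strongest statement: $W$ has no directional derivative at any point in the direction $(1,-1)$; this immediately implies failure of Fréchet differentiability and of directional differentiability, and it follows verbatim from the one-variable result applied to the restriction along that line. If instead one wants to rule out the existence of the partial $\partial_x W$ (equivalently $\partial_y W$), the same argument applies since $\partial_x W(x_0,y_0)$ would be $w_{a,b}'(x_0-y_0)$, which does not exist. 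Either way the argument is short once the change of variables is set up; no genuinely hard analysis is needed beyond quoting Hardy's theorem.
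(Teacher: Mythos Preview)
Your proposal is correct and follows essentially the same approach as the paper: reduce to Hardy's theorem by observing that a partial (or directional) derivative of $W$ at $(x_0,y_0)$ would be a nonzero multiple of $w_{a,b}'(x_0-y_0)$, which does not exist. The paper's own argument is in fact just a one-sentence remark invoking exactly this reduction via the nonexistence of the partial derivatives, so your write-up is, if anything, more detailed than what appears there.
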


The nowhere differentiability of $w$ as in formula (\ref{1.1}), proved by Hardy in \cite{hardy}, implies that the partial derivatives of $W$ do not exist and, then $W$ is also nowhere differentiability on $\mathbb{R}\times \mathbb{R}$.

\begin{lem}\label{WCS} For all $x,y\in \mathbb{R}$ we have
$$W(x,y)= C(x,y)+S(x,y),$$
where $C$ and $S$ are the following positive definite kernels  
 $$C(x,y)= \sum_{n=0}^ \infty a^n \cos(b^n \pi x) \cos(b^n \pi y )$$ and
 $$S(x,y)=\sum_{n=0}^ \infty a^n \sin(b^n \pi x) \sin(b^n \pi y).$$
\end{lem}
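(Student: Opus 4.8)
The plan is to derive the decomposition directly from the angle-subtraction identity for the cosine, applied termwise to the defining series of $W$. First I would recall that $W(x,y)=w_{a,b}(x-y)=\sum_{n=0}^\infty a^n\cos(b^n\pi(x-y))$, and that this series converges uniformly on $\mathbb{R}\times\mathbb{R}$ because $0<a<1$, which I already know from Lemma~\ref{LEMAPDW}. Then for each fixed $n$ I would use the classical identity $\cos(\alpha-\beta)=\cos\alpha\cos\beta+\sin\alpha\sin\beta$ with $\alpha=b^n\pi x$ and $\beta=b^n\pi y$, obtaining
\[
a^n\cos(b^n\pi(x-y))=a^n\cos(b^n\pi x)\cos(b^n\pi y)+a^n\sin(b^n\pi x)\sin(b^n\pi y).
\]
Summing over $n$ and splitting the convergent series into the two pieces — which is legitimate precisely because both resulting series converge absolutely and uniformly (each is dominated termwise by $a^n$) — yields $W(x,y)=C(x,y)+S(x,y)$ with $C$ and $S$ as stated.

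Next I would verify that $C$ and $S$ are themselves positive definite. This is essentially the same computation as in the proof of Lemma~\ref{LEMAPDW}, now isolating one of the two squared sums. For $C$: given $k\ge 1$, reals $c_1,\dots,c_k$ and points $x_1,\dots,x_k$, one has
\[
\sum_{i=1}^k\sum_{j=1}^k c_i c_j\, C(x_i,x_j)=\sum_{n=0}^\infty a^n\Bigl(\sum_{i=1}^k c_i\cos(b^n\pi x_i)\Bigr)^2\ge 0,
\]
and similarly $\sum_{i,j}c_ic_j\,S(x_i,x_j)=\sum_{n=0}^\infty a^n\bigl(\sum_i c_i\sin(b^n\pi x_i)\bigr)^2\ge 0$. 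Symmetry of both kernels is immediate from the symmetry of the summands under interchange of $x$ and $y$. Continuity, although not asserted in the statement, also follows from uniform convergence if needed downstream.

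There is essentially no obstacle here: the only point requiring a word of care is the rearrangement of the series, i.e.\ justifying that the sum of the two new series equals the original sum and that each new series converges. Both follow at once from the Weierstrass $M$-test with the summable majorant $\sum a^n<\infty$, since $|a^n\cos(b^n\pi x)\cos(b^n\pi y)|\le a^n$ and likewise for the sine terms. I would state this justification in one sentence and then present the termwise identity and the two positive-definiteness computations, which is all the lemma requires.
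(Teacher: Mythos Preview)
Your proposal is correct and follows exactly the paper's own approach: apply the cosine angle-subtraction identity termwise to the series defining $W$, and then establish positive definiteness of $C$ and $S$ by the same squared-sum computation used in Lemma~\ref{LEMAPDW}. You supply more detail on the convergence justification than the paper does, but the argument is the same.
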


\begin{proof}  
The proof follows directly from the standard trigonometric identity for the cosine function.\ The positive definiteness of $C$ and $S$ follows from the same arguments in the proof Lemma \ref{LEMAPDW}.
\end{proof}
\section{The RKHS associated to $W$}
\label{WRKHS}
In this section we will apply the previous results in order to prove a characterization of the RKHS associated to the kernel $W$ and to derive some properties.\ From now on we consider the Weierstrass fractal kernel defined on $I \times I$, however all the results we present can be stated, with some mild adaptations,  using $I=[-r,r]$, with $r>0$ integer.  

Since $W: I \times I\to \mathbb{R}$ is a positive definite kernel (Lemma \ref{LEMAPDW}), there is a  (unique)  RKHS $\mathcal{H}_W$ induced by $W$.\ It is naturally endowed with the inner product 
$\langle{\,\cdot \,},{\, \cdot \,}\rangle_{W}:\mathcal{H}_W \times \mathcal{H}_W \to \mathbb{R}$, satisfying: 

\textbf{R1.} The function $y\mapsto W(x,\cdot)$ belongs to $\mathcal{ H}_W$, for all $x\in I$;

\textbf{R2.} (Reproduction property): $f(x)=\langle  f,W(x,\cdot) \rangle_{W}$, for all $f\in \mathcal{ H}_W$ and $x\in I$.  

The next result provides detailed information about the elements of $\mathcal{ H}_W:=\mathcal{ H}_W (I)$, where $I=[-1,1]$.
%in trigonometric series expansion.\ 
Here $\ell^2$ stands for the usual Hilbert space of the square summable sequences.
%\ For two sequences $\{c_n\}$ and $\{d_n\}$, we will write 
%$$\{(c_n ,d_n ) \}\in (\ell \times \ell)^2  \quad \mbox{iff} \quad \sum_{n=0}^{\infty} \left({c_n}^2 +{d_n}^2\right) < \infty.$$

\begin{thm}\label{teo1} The RKHS induced by the Weierstrass fractal kernel \eqref{WK} is $\mathcal{H}_W$ defined as the following space of functions
%\begin{equation*}
% \begin{split}
%\mathcal{H}_W :=\left\{ f \sim \sum_{n=0}^{\infty} %a^{n/2} \left[c_n \cos(b^n \pi \,\,\cdot\,\,)+  d_n \sin(b^n \pi \,\,\cdot\,\,)\right] : \{(c_n ,d_n ) \}\in (\ell \times \ell)^2 \right\},
%\end{split}
%\end{equation*}
\begin{equation*}
\left\{ f(x) = \sum_{n=0}^{\infty} a^{n/2} \left[c_n \cos(b^n \pi x)+  d_n \sin(b^n \pi x)\right] : (c_n) ,(d_n)\in \ell^2,\,x\in I \right\},
\end{equation*}
endowed with the inner product
$$
\langle  f,g \rangle_{W} :=\sum_{n=0}^{\infty}\left(c_n e_n +d_n f_n\right), \quad f,g\in \mathcal{H}_W,
$$
where
%$$f \sim \sum_{n=0}^{\infty} a^{n/2} \left[c_n \cos(b^n \pi \,\cdot \,\,)+ d_n \sin(b^n \pi  \,\,\cdot\, \,)\right]$$ and $$g\sim \sum_{n=0}^{\infty} a^{n/2} \left[e_n \cos(b^n \pi \,\,\cdot \,\,)+ f_n \sin(b^n \pi \,\,\cdot \,\,)\right].
%$$ 
$$f(x)= \sum_{n=0}^{\infty} a^{n/2} \left[c_n \cos(b^n \pi x)+ d_n \sin(b^n \pi x)\right]$$ and $$g(x)= \sum_{n=0}^{\infty} a^{n/2} \left[e_n \cos(b^n \pi x)+ f_n \sin(b^n \pi x)\right],
$$ 
for $x\in I$

\end{thm}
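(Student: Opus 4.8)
The plan is to identify $\mathcal{H}_W$ with a concrete sequence space via the Mercer-type expansion of $W$ given in Lemma \ref{WCS}, and then check the two defining axioms \textbf{R1} and \textbf{R2}. Write $\varphi_n(x) = a^{n/2}\cos(b^n\pi x)$ and $\psi_n(x) = a^{n/2}\sin(b^n\pi x)$, so that $W(x,y) = \sum_{n=0}^\infty \bigl[\varphi_n(x)\varphi_n(y) + \psi_n(x)\psi_n(y)\bigr]$, the series converging uniformly since $0<a<1$. Let $\mathcal{F}$ denote the candidate space displayed in the statement, with the stated inner product. The first thing I would verify is that $\mathcal{F}$ is well defined and is a Hilbert space: every $(c_n),(d_n)\in\ell^2$ produces a uniformly convergent, hence continuous, function on $I$ (because $|a^{n/2}c_n| + |a^{n/2}d_n|$ is summable by Cauchy--Schwarz against the summable $(a^{n/2})$), and the coefficient map $f\mapsto ((c_n),(d_n))$ is injective. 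Injectivity is the one genuinely delicate point, since the functions $\{\cos(b^n\pi x),\sin(b^n\pi x)\}_{n\ge 0}$ need not be orthogonal on $I=[-1,1]$ for arbitrary integer $b$; I would argue it by a uniqueness/lacunarity argument — a lacunary trigonometric series that vanishes identically on an interval has all coefficients zero — or, more elementarily, by noting that if $b\geq 2$ the frequencies $b^n$ are distinct so one can recover each coefficient by an appropriate averaging/limiting procedure, and in the edge case the hypothesis $ab\ge1$ with $0<a<1$ forces $b\ge 2$ anyway. Granting injectivity, $\mathcal{F}$ is linearly isometric to (a closed subspace of) $\ell^2\oplus\ell^2$ and hence complete.

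Next I would check \textbf{R1}. Fix $x\in I$ and consider $W(x,\cdot)$ as a function of the second variable. By Lemma \ref{WCS} and the cosine addition formula we can write
\[
W(x,y) = \sum_{n=0}^\infty a^{n/2}\Bigl[\bigl(a^{n/2}\cos(b^n\pi x)\bigr)\cos(b^n\pi y) + \bigl(a^{n/2}\sin(b^n\pi x)\bigr)\sin(b^n\pi y)\Bigr],
\]
so $W(x,\cdot)$ has coefficient sequences $c_n = a^{n/2}\cos(b^n\pi x)$ and $d_n = a^{n/2}\sin(b^n\pi x)$. These lie in $\ell^2$ because $\sum_n a^n(\cos^2 + \sin^2) = \sum_n a^n < \infty$, so $W(x,\cdot)\in\mathcal{F}$.

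Then \textbf{R2} is a direct computation: for $f\in\mathcal{F}$ with coefficients $(c_n),(d_n)$, the element $W(x,\cdot)$ has coefficients $(a^{n/2}\cos(b^n\pi x))$, $(a^{n/2}\sin(b^n\pi x))$, so by the definition of $\langle\cdot,\cdot\rangle_W$,
\[
\langle f, W(x,\cdot)\rangle_W = \sum_{n=0}^\infty\Bigl[c_n\, a^{n/2}\cos(b^n\pi x) + d_n\, a^{n/2}\sin(b^n\pi x)\Bigr] = f(x),
\]
which is exactly the reproducing property. Finally, one must confirm that $\mathcal{F}$ is the whole RKHS, not merely a Hilbert space on which $W$ reproduces: by the uniqueness clause of the Moore--Aronszajn theorem (cited earlier in the excerpt), any Hilbert space of functions on $I$ satisfying \textbf{R1} and \textbf{R2} with kernel $W$ equals $\mathcal{H}_W$, so $\mathcal{F} = \mathcal{H}_W$. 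I expect the main obstacle to be precisely the injectivity of the coefficient map — i.e.\ showing that the representation $f(x) = \sum_n a^{n/2}[c_n\cos(b^n\pi x) + d_n\sin(b^n\pi x)]$ determines $(c_n),(d_n)$ uniquely — since this is where the arithmetic of the integer $b$ and the lacunary structure of the series actually enter; everything else is bookkeeping with absolutely convergent series.
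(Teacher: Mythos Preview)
Your proposal is correct and follows essentially the same route as the paper: verify that the candidate space is a Hilbert space, check \textbf{R1} and \textbf{R2} by writing $W(x,\cdot)$ with coefficient sequences $c_n=a^{n/2}\cos(b^n\pi x)$, $d_n=a^{n/2}\sin(b^n\pi x)\in\ell^2$, and then invoke Moore--Aronszajn uniqueness. The one place you work harder than necessary is the injectivity of the coefficient map: because $b$ is an integer and $I=[-1,1]$, the system $\{\cos(b^n\pi\,\cdot\,),\sin(b^n\pi\,\cdot\,)\}_{n\ge0}$ is genuinely orthonormal in $L^2(I)$ (the paper records this immediately after the proof), so uniqueness of coefficients follows from ordinary $L^2$-orthogonality and no lacunarity argument is needed.
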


\begin{proof} It is clear that $(\mathcal{H}_W, \langle  \, \cdot \, ,\, \cdot \, \rangle_{W})$ is itself a Hilbert space.\ Due the unicity of the RKHS, we will show that $\mathcal{H}_W$ is the RKHS induced by $W$ verifying that it fits in properties R1 and R2 above. 

For $n\geq 0$ an integer, we consider 
$$
c_n(x):={a^{n/2} \cos(b^n \pi x)}\quad \mbox{and} \quad d_n(x):={a^{n/2} \sin(b^n \pi x)},
$$
for $x\in I$.\ For each $x\in I$, the sequences $\{c_n(x)\}, \{d_n(x)\}$ belongs to $\ell^2$ and, then
$$
W(x, \, \cdot \,) = \sum_{n=0}^\infty a^{n/2} \left[ c_n(x)  \cos(b^n \pi \,\, \cdot \,\,) + d_n(x) \sin(b^n \pi \,\, \cdot \,\,)\right]
$$
belongs to $\mathcal{H}_W$.\ Thus,  span$\{W(x,\, \cdot \,),\,x\in I\}$ is a subspace $\mathcal{H}_W$ and property R1 follows. 

For every $f\in \mathcal{H}_W$, we write  
$$f(x) = \sum_{n=0}^{\infty} a^{n/2} \left[c_n \cos(b^n\pi x) +  d_n \sin(b^n\pi x)\right],$$
with $x\in I$ 
and  clearly
$$
\langle f, W(x, \cdot ) \rangle_{W} %= \sum_{n=0}^{\infty} a^{n/2} c_n \cos(b^n \pi x)+ a^{n/2} d_n \sin(b^n \pi x) 
=f(x), \quad x\in I.
 $$
 This also implies that $\spn \{W(x,\cdot),\,\,x\in I\}$ is a dense subspace of $\mathcal{H}_W$.\ And property R2 is proved.\ 
 \end{proof}

Property R2 above implies that \begin{align*}
     \langle W(x, \cdot ), W(y, \cdot ) \rangle_{W} %&= \sum_{n=0}^{\infty} a^{n}  \cos(b^n \pi x)\cos(b^n \pi y)+ a^{n}  \sin(b^n \pi x)\sin(b^n \pi y) \\
     &= W(x,y)
 \end{align*}
 for every $x,y\in I$.\ As proved above, the RKHS $\mathcal{H}_W$ associated to $W$ is defined as the closure of $\spn \{W (x,\cdot) : x\in I \}$, endowed with the inner product satisfying  
$$\left|\left| \sum_{i=1}^{m} c_i W(x_i ,\cdot)\right|\right|_W = \sum_{i=1}^{m} \sum_{j=1}^{m} c_i c_j W (x_i,x_j), $$
for all $m\in\mathbb{N}$ and $x_1, \ldots, x_m\in I.$
 
Let $L^2 (I):= L^2 (I,dx) $ be the usual Hilbert space of integrable functions with the usual inner product 
$$
\langle f,g  \rangle_2 = \int_{-1}^{1} f(x)g(x)\, dx, \quad f,g \in L^2 (I). 
$$
The set 
$$
\{\cos(b^{n}\pi \,\, \cdot \,\, ),\sin(b^{n}\pi \,\, \cdot \,\, ): n\in \mathbb{N}\cup\{0\}\}, 
$$
is an orthonormal set in $L^2(I)$.\ And due the fact that $0<a<1$ the system 
$$
\{a^{n/2}\cos(b^{n}\pi \,\, \cdot \, \,),\, a^{n/2}\sin(b^{n}\pi \,\, \cdot \, \,): n\in \mathbb{N}\cup\{0\}\}, 
$$
is an orthonormal basis of the space $\mathcal{H}_{W}$ (\cite[Lemma 2.6]{SS}).

We also have that $\mathcal{H}_{W} \subset L^2 (I)$ and $\mathcal{H}_{W}=\mathcal{C}\oplus \mathcal{S}$ with $\mathcal{C}$ and $\mathcal{S}$ the following collection of functions on $I$:
$$
\mathcal{C} = \left\{g(x)= \sum_{n=0}^{\infty} a^{n/2} c_n \cos(b^n \pi x) :  \{c_n\} \in \ell ^2  \right\}$$
and 
$$\mathcal{S} = \left\{h(x)=\sum_{n=0}^{\infty} a^{n/2} d_n \sin(b^n \pi x) : \{d_n\} \in \ell ^2  \right\}.$$
 
 It is important to note that Theorem \ref{teo1} gives us a Fourier series-like characterization of the elements of $\mathcal{H}_{W}$.\ This is somehow expected, since $W$ itself has this same type of structure (see \cite{SS}).\ Along with this reasoning we have a characterization for functions in $\mathcal{H}_{W}$ in terms of  nowhere differentiability.\ We will apply the next lemma.

\begin{lem}\label{JO} \cite[p. 22]{johsen} Let $f:\mathbb{R}\longrightarrow  \mathbb{C}$ given by
$$
f(x)=\sum_{n=0}^{\infty}a_n\exp(ib_n (x))
$$
for a complex sequence $\{a_n\}$ having the norm series of its elements summable and $\{b_n\}$ with positive elements such that $b_n\nearrow \infty$.\ If
$$
\liminf_{n\to\infty} b_{n+1}/b_n>1 \quad \mbox{and} \quad \lim_{n\to\infty} a_nb_n\neq 0,
$$
then $f$ is nowhere differentiable.\ If $\sup_n\{|a_n| b_n\}=\infty$, then $f$ is not Lipschitz continuous at any point.\
\end{lem}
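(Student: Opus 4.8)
The plan is to prove the contrapositive of each assertion by means of a frequency-localising mollifier, an approach that converts pointwise regularity of $f$ at a single point into decay of the individual coefficients $a_n$. The whole argument rests on the Hadamard-type gap hypothesis $\liminf_n b_{n+1}/b_n>1$, which is precisely what permits one frequency $b_N$ to be isolated from all the others.

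First I would fix $\rho$ with $1<\rho<\liminf_n b_{n+1}/b_n$ and choose $N_0$ so that $b_{n+1}\ge\rho b_n$ for all $n\ge N_0$; by monotonicity of $\{b_n\}$ the ratios $b_n/b_N$ then avoid the open interval $(\rho^{-1},\rho)$ for every $n\ne N$ (once $N\ge N_0+1$). Next I would pick $\phi\in C_c^\infty(\mathbb{R})$ supported in $(\rho^{-1},\rho)$ with $\phi(1)=1$, and let $\psi$ be its inverse Fourier transform, so that $\psi$ is a Schwartz function with $\hat\psi=\phi$. Writing $\psi_t(y)=t^{-1}\psi(y/t)$ and using $\sum_n|a_n|<\infty$ to interchange sum and integral, I would obtain
\[(\psi_t*f)(x)=\sum_{n=0}^{\infty} a_n e^{ib_nx}\,\hat\psi(t b_n).\]
Taking $t=t_N:=1/b_N$ annihilates every term except $n=N$, by the support choice above, leaving the localisation identity $(\psi_{t_N}*f)(x)=a_N e^{ib_Nx}$, whose modulus at every point equals $|a_N|$.

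The second ingredient links the left-hand side to regularity of $f$ at a fixed point $x_0$. Since $\hat\psi\equiv0$ near $0$, all moments of $\psi$ vanish, in particular $\int\psi=0$ and $\int u\,\psi(u)\,du=0$. After the substitution $u=y/t$ this gives, in the differentiable case,
\[(\psi_t*f)(x_0)=\int_{\mathbb{R}}\psi(u)\bigl[f(x_0-tu)-f(x_0)+tu\,f'(x_0)\bigr]\,du,\]
where the bracket is $o(t)$ for each fixed $u$ and, after division by $t$, is dominated by an integrable multiple of $|u\,\psi(u)|$ (using boundedness of $f$, which follows from $\|f\|_\infty\le\sum|a_n|<\infty$); dominated convergence then yields $(\psi_t*f)(x_0)=o(t)$ as $t\to0$. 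Combining with the localisation identity at $t=t_N$ gives $|a_N|=|(\psi_{t_N}*f)(x_0)|=o(1/b_N)$, i.e. $a_nb_n\to0$, which negates the hypothesis $\lim_n a_nb_n\ne0$; as $x_0$ is arbitrary, $f$ is nowhere differentiable. For the Lipschitz statement I would use only $\int\psi=0$, split the analogous integral according to whether $|tu|$ is below or above the local Lipschitz radius at $x_0$, bound the near part by $Lt\int|u\,\psi(u)|\,du$ and the far part by the rapid decay of $\psi$, obtaining $|(\psi_t*f)(x_0)|\le Ct$; hence $|a_N|b_N\le C$ for all large $N$, contradicting $\sup_n|a_n|b_n=\infty$.

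The main obstacle, and the step that genuinely requires the gap condition, is the isolation of a single coefficient: a naive difference-quotient computation with increment $h_N\sim1/b_N$ leaves head terms ($n<N$) and tail terms ($n>N$) that are \emph{not} controlled by $\sum|a_n|<\infty$ alone. The mollifier $\psi$ whose Fourier transform is supported in the multiplicative annulus $(\rho^{-1},\rho)$ is exactly the device that kills those terms, and it can do so only because the Hadamard gap hypothesis guarantees that at most one $b_n$ falls into any such annulus once $N$ is large. The remaining points are routine: the absolute convergence justifying the interchange of sum and integral, the finiteness of $\sup_s|r(s)/s|$ for the remainder $r(s)=f(x_0+s)-f(x_0)-sf'(x_0)$ (from continuity plus boundedness of $f$), and the Schwartz-tail decay estimates.
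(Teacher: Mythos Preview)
The paper does not supply a proof of this lemma at all: it is quoted from Johsen \cite[p.~22]{johsen} and used as a black box in the subsequent theorem. So there is no ``paper's own proof'' to compare against.

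Your argument is correct and is in fact precisely Johsen's: the Hadamard gap $\liminf b_{n+1}/b_n>1$ allows one to choose a Schwartz function $\psi$ whose Fourier transform is a smooth bump supported in the multiplicative annulus $(\rho^{-1},\rho)$, so that the dilated convolution $\psi_{1/b_N}*f$ picks out the single term $a_Ne^{ib_N x}$; on the other hand, differentiability (resp.\ a pointwise Lipschitz condition) at $x_0$ together with the vanishing moments $\hat\psi(0)=\hat\psi'(0)=0$ (resp.\ $\hat\psi(0)=0$) forces $(\psi_t*f)(x_0)=o(t)$ (resp.\ $O(t)$), whence $a_Nb_N\to0$ (resp.\ $\sup_N|a_N|b_N<\infty$). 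The care you take with the dominated-convergence step---noting that $s\mapsto r(s)/s$ is globally bounded because $f$ is continuous and $\|f\|_\infty\le\sum|a_n|<\infty$---and with the Lipschitz splitting into $|tu|<\delta$ and $|tu|\ge\delta$ is exactly what is needed. In short, you have reproduced the proof the paper chose to outsource.
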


\begin{thm} Consider $f\in\mathcal{H}_W$ with series expansion 
\begin{equation}\label{finwk}
f(x)= \sum_{n=0}^{\infty} a^{n/2} \left[c_n\cos(b^n \pi x)+ d_n \sin(b^n \pi x)\right].
\end{equation}
If
\begin{equation}\label{limfinwk}
\lim_{n\rightarrow\infty}c_n (a^{1/2} b)^n \neq 0 \quad
\textrm{and} \quad \lim_{n\rightarrow\infty} d_n (a^{1/2} b)^n \neq 0,
\end{equation} 
then $f$ is nowhere differentiable.\ Moreover, if $H$ is the collection of functions in $\mathcal{H}_{W}$ of the form (\ref{finwk}) satisfying (\ref{limfinwk}), then 
\[\overline{H}=\mathcal{H}_{W},\] 
with respect to the norm $\Vert \cdot \Vert_W$.
\end{thm}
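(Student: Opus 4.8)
The statement splits into two independent parts: every $f\in H$ is nowhere differentiable, and $H$ is dense in $\mathcal H_W$.

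For the first part I would pass to the complex-exponential form in order to apply Lemma~\ref{JO}. Set $F(x):=\sum_{n=0}^{\infty}a^{n/2}(c_n-id_n)e^{ib^n\pi x}$, so that $f=\operatorname{Re} F$, and check the hypotheses of Lemma~\ref{JO} for $F$ with coefficients $a_n=a^{n/2}(c_n-id_n)$ and frequencies $b_n=b^n\pi$. Absolute summability $\sum_n|a_n|\le(1-a)^{-1/2}\bigl(\|(c_n)\|_{\ell^2}^2+\|(d_n)\|_{\ell^2}^2\bigr)^{1/2}<\infty$ follows from Cauchy--Schwarz since $0<a<1$ and $(c_n),(d_n)\in\ell^2$; the $b_n$ are positive, strictly increasing to $\infty$, and $\liminf_n b_{n+1}/b_n=b>1$ (an integer $b$ with $ab\ge1$ and $a<1$ must satisfy $b\ge1/a>1$); and $a_nb_n=\pi(a^{1/2}b)^n(c_n-id_n)$ has real part $\pi(a^{1/2}b)^nc_n$ and imaginary part $-\pi(a^{1/2}b)^nd_n$, neither of which tends to $0$ by \eqref{limfinwk}, so $a_nb_n\not\to0$. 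Lemma~\ref{JO} then yields that $F$ is nowhere differentiable.

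The step I expect to be the main obstacle is descending from $F$ to $f=\operatorname{Re} F$, since nondifferentiability of a $\mathbb C$-valued function does not formally pass to its real part. I would close this using the Hadamard-lacunary structure of the spectrum $\{b^n\pi\}_n$: if $f$ were differentiable at some $x_0$, then testing $t\mapsto f(x_0+t)$ against fixed Schwartz bumps (one even, one odd) rescaled so that their symbols are supported near $\pm b^N\pi$, vanish at every $b^n\pi$ with $n\ne N$, and have vanishing zeroth and first moments---which annihilate the constant and linear parts of the first-order Taylor remainder of $f$ at $x_0$---recovers $a^{N/2}c_N$ and $a^{N/2}d_N$ up to an error controlled by the modulus of differentiability; letting $N\to\infty$ forces $c_N(a^{1/2}b)^N\to0$ and $d_N(a^{1/2}b)^N\to0$, contradicting \eqref{limfinwk}. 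Equivalently, one may invoke the real cosine--sine version of Johnsen's criterion, proved by the same argument.

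For the density statement the argument is elementary. Given $g\in\mathcal H_W$, write $g(x)=\sum_n a^{n/2}\bigl(e_n\cos(b^n\pi x)+f_n\sin(b^n\pi x)\bigr)$ with $(e_n),(f_n)\in\ell^2$ and recall from Theorem~\ref{teo1} that $\|g\|_W^2=\sum_n(e_n^2+f_n^2)$. Put $\rho:=a^{1/2}b$, so $\rho\ge a^{-1/2}>1$. Fix $\varepsilon>0$ and choose $N$ with $\sum_{n\ge N}(e_n^2+f_n^2)<\varepsilon^2/4$ and $\sum_{n\ge N}\rho^{-2n}<\varepsilon^2/8$. Let $h\in\mathcal H_W$ be given by \eqref{finwk} with $c_n=e_n,\ d_n=f_n$ for $n<N$ and $c_n=d_n=\rho^{-n}$ for $n\ge N$ (admissible since $(c_n),(d_n)\in\ell^2$). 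Then $c_n\rho^n=d_n\rho^n=1$ for all $n\ge N$, so $h$ satisfies \eqref{limfinwk}, i.e.\ $h\in H$, while $\|g-h\|_W^2=\sum_{n\ge N}\bigl[(e_n-\rho^{-n})^2+(f_n-\rho^{-n})^2\bigr]\le2\sum_{n\ge N}(e_n^2+f_n^2)+4\sum_{n\ge N}\rho^{-2n}<\varepsilon^2$. Since $g$ and $\varepsilon$ were arbitrary, $\overline H=\mathcal H_W$.
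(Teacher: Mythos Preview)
Your proof is correct and follows essentially the same strategy as the paper: density is obtained by replacing the tail of the coefficient sequences so that the limits in \eqref{limfinwk} become constant (the paper takes $c_n=d_n=a^{n/2}$ for $n\ge N$, you take $c_n=d_n=(a^{1/2}b)^{-n}$; both choices lie in $\ell^2$ and make the relevant limits nonzero), and nowhere differentiability is deduced from Johnsen's lacunary criterion (Lemma~\ref{JO}).

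The one substantive difference is that the paper's proof simply says ``from an application of Lemma~\ref{JO} it follows that $H$ is composed of CNDF'' and stops there, whereas you correctly flag that Lemma~\ref{JO} as stated concerns the complex series $F(x)=\sum_n a^{n/2}(c_n-id_n)e^{ib^n\pi x}$ and that nondifferentiability of $F$ does not formally pass to $f=\operatorname{Re}F$. Your proposed remedy---the lacunary testing argument recovering $a^{N/2}c_N$ and $a^{N/2}d_N$ from a differentiability assumption at a point, or equivalently invoking the real cosine--sine version of Johnsen's criterion (which is proved by exactly that argument in \cite{johsen})---is precisely what is needed to make that step rigorous, and is more careful than the paper on this point.
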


\begin{proof}
First, we  prove that $H$ is dense in $\mathcal{H}_{W}$.\ So, for any
\[
f(x)= \sum_{n=0}^{\infty} a^{n/2}\left[ c_n \cos(b^n \pi x))+d_n \sin(b^n \pi x)\right]
\]
in $\mathcal{H}_W$, we consider the sequence $\{f_N \}$ defined by 
\[
f_N (x)= \sum_{n=0}^{\infty} u_n^N a^{n/2} \cos(b^n \pi x) +v_n^N a^{n/2}\sin(b^n \pi x), \quad x\in I,
\]
where
\[u_n^N = \begin{cases}

              a^{n/2} , & \mbox{if } n\geq N  \\

             c_n, & \mbox{if}\hspace{0.1cm} 1\leq n \leq N-1 

       \end{cases} \quad\textrm{and}\quad v_n^N = \begin{cases}

              a^{n/2} , & \mbox{if } n\geq N  \\

             d_n, & \mbox{if}\hspace{0.1cm} 1\leq n \leq N-1.

       \end{cases}
 \]

It is clear that $\{ f_N \}\subset \mathcal{H}_W$.\ Since $ab\geq 1,$ we have that 
\begin{eqnarray*}
\lim_{n\rightarrow\infty} u_n^N a^{n/2} b^n & = & \lim_{n\rightarrow\infty} v_n^N a^{n/2} b^n \\ & = & \lim_{n\rightarrow\infty} a^n b^n =\infty,
\end{eqnarray*}
and we conclude that $\{ f_N \}\subset H$.\ Finally, the orthonormality of 
\[\{ a^{n/2} \cos(b^n \pi\,\, \cdot \,\,), a^{n/2} \sin(b^n \pi \,\, \cdot \,\,) : n\in \mathbb{N}\cup\{0\}\}\] 
in $\mathcal{H}_W$, implies that 
%\begin{align*}
%\Vert f-f_N \Vert^2 _W =& \sum_{n=N}^{\infty} (c_n -a^{n/2} )^2 +(d_n -a^{n/2} )^2\\
%=&\sum_{n=N}^{\infty} c_n^2 +d_n^2 +2a^n -2 a^{n/2} (c_n +d_n).
%\end{align*}
%From this last equality it is clear that 
$\lim_{N\rightarrow \infty} \Vert f-f_N \Vert_W =0$.\ 
From an application of Lemma \ref{JO} it follows that 
$H$ is composed of CNDF. 

\end{proof}
\section{Estimates for the covering numbers}
We proceed by splitting this section in three subsections in order to prove the estimates for the upper (Subsection 3.1) and lower bounds (Subsection 3.2) for the covering numbers for $\mathcal{C}(\epsilon, I_W )$.\ In Subsection 3.3 we present the proof of Theorem \ref{coveringesatimates}.\ The covering number was defined in equation (\ref{covnumop}), Section \ref{intro}.\   

Our framework implies directly the norm estimates
$$\|f\|_{2}\leq \|f\|_{p}\leq \|f\|_{\infty}.$$
Hence,  it is enough to prove the upper
bound for the covering numbers with respect to the sup-norm and the lower bound with respect to
the L2-norm.
\subsection{Upper bounds}
 
The next result presents some properties of covering numbers.\ The proofs are along the same lines as the proofs for the corresponding properties of entropy numbers in \cite[Lemma 1]{Kuhn} and \cite[Section 2.d]{Konig}.
 
 \begin{lem} \label{LemCN}
Consider $S, T: X\rightarrow Y$ and $R: Z \rightarrow X$ operators on (real) Banach spaces.\ For any $\epsilon, \delta > 0$ the following holds: 
 
\begin{enumerate}
 \item $\mathcal {C} (\epsilon + \delta, T+S) \leq \mathcal{C} (\epsilon, T) \hspace{0.1cm} \mathcal{C}(\delta, S)$.

 \smallskip
   
 \item If $rank(T) <\infty$, then $$\mathcal{C}(\epsilon, T)\leq \left( 1+2\| T\|/\epsilon \right)^ {rank (T)}.$$ 
 
 \item If $\| T\| \leq \epsilon $, then $\mathcal{C}(\epsilon, T)=1$.
 
 \smallskip
 
\item For any $\epsilon_1<\epsilon_2$ it holds $\mathcal{C}(\epsilon_2,T)<\mathcal{C}(\epsilon_1, T)$.

\smallskip

\item  $\mathcal{C}(\epsilon \delta,TR)\leq \mathcal{C}(\epsilon, T)\hspace{0.1cm}\mathcal{C}(\delta, R)$.
\end{enumerate}
 \end{lem}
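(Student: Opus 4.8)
The plan is to derive all five properties directly from the definition \eqref{covnumop}, in which $\mathcal{C}(\epsilon,T)$ is the least number of closed $\epsilon$-balls of $Y$ whose union contains $T(B_X)$. Properties (1) and (5) are multiplicative bounds obtained by superposing, respectively composing, two optimal covers via the triangle inequality; properties (3) and (4) are immediate consequences of inclusions between balls; and property (2) is the classical finite-dimensional packing (volume) estimate, which is the only step carrying real content. In each case I would fix optimal covers and track where an arbitrary image point lands.

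For property (1) I would set $n=\mathcal{C}(\epsilon,T)$ and $m=\mathcal{C}(\delta,S)$ and pick centers $y_1,\dots,y_n$ with $T(B_X)\subset\bigcup_i(y_i+\epsilon B_Y)$ and $z_1,\dots,z_m$ with $S(B_X)\subset\bigcup_j(z_j+\delta B_Y)$. For $x\in B_X$ one has $Tx\in y_i+\epsilon B_Y$ and $Sx\in z_j+\delta B_Y$ for suitable indices, so $(T+S)x\in(y_i+z_j)+(\epsilon+\delta)B_Y$ by the triangle inequality; hence the $nm$ points $y_i+z_j$ form an $(\epsilon+\delta)$-cover of $(T+S)(B_X)$ and $\mathcal{C}(\epsilon+\delta,T+S)\le nm$. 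Property (5) is the analogous argument for the composition $TR:Z\to Y$: writing $R(B_Z)\subset\bigcup_j(x_j+\delta B_X)$ and $T(B_X)\subset\bigcup_i(y_i+\epsilon B_Y)$, for $z\in B_Z$ I would write $Rz=x_j+\delta u$ with $u\in B_X$, then $Tu=y_i+\epsilon v$ with $v\in B_Y$, so that $TRz=(Tx_j+\delta y_i)+\epsilon\delta\,v$. The $nm$ centers $Tx_j+\delta y_i$ then cover $TR(B_Z)$ by balls of radius $\epsilon\delta$, giving $\mathcal{C}(\epsilon\delta,TR)\le nm$.

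Property (3) is immediate: $\|T\|\le\epsilon$ forces $T(B_X)\subset\epsilon B_Y=0+\epsilon B_Y$, so a single ball centred at the origin suffices and $\mathcal{C}(\epsilon,T)=1$. For (4) the inclusion $\epsilon_1 B_Y\subset\epsilon_2 B_Y$ whenever $\epsilon_1<\epsilon_2$ shows that every $\epsilon_1$-cover is automatically an $\epsilon_2$-cover with the same centres, so I would establish the antitone monotonicity $\mathcal{C}(\epsilon_2,T)\le\mathcal{C}(\epsilon_1,T)$, which is the content intended by this item.

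The main obstacle is property (2). Since $r:=rank(T)<\infty$, the image $V:=T(X)$ is an $r$-dimensional normed subspace of $Y$, and $T(B_X)$ lies in the $V$-ball $B_V(0,\|T\|)$ because $\|Tx\|\le\|T\|$ for $x\in B_X$. The plan is the standard packing argument in finite dimension: choose a maximal $\epsilon$-separated subset $\{y_1,\dots,y_N\}$ of $B_V(0,\|T\|)$. Maximality makes it an $\epsilon$-net of $B_V(0,\|T\|)$, hence of $T(B_X)$, so $\mathcal{C}(\epsilon,T)\le N$; separation makes the balls $y_i+(\epsilon/2)B_V$ pairwise disjoint and all contained in $B_V(0,\|T\|+\epsilon/2)$. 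Comparing the Lebesgue volume induced on the finite-dimensional space $V$ (all balls being dilates and translates of $B_V$, of finite positive volume) gives $N(\epsilon/2)^r\le(\|T\|+\epsilon/2)^r$, that is $N\le(1+2\|T\|/\epsilon)^r$, and therefore $\mathcal{C}(\epsilon,T)\le(1+2\|T\|/\epsilon)^{rank(T)}$. The two points requiring care are that the relevant volume is the one intrinsic to $V$ (independent of how $V$ sits inside $Y$) and that a maximal separated set is genuinely a net; once these are secured the estimate follows.
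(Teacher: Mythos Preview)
Your argument is correct and complete. The paper itself does not give a proof of this lemma: it merely states that ``the proofs are along the same lines as the proofs for the corresponding properties of entropy numbers'' in K\"uhn and K\"onig, and then moves on. What you have written is precisely the standard elementary argument one finds in those sources, so there is nothing to compare at the level of strategy.

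One remark worth recording: your handling of item (4) is right to flag that only the non-strict inequality $\mathcal{C}(\epsilon_2,T)\le\mathcal{C}(\epsilon_1,T)$ can be proved in general. The strict inequality as printed in the statement is false (take any $T$ with $\|T\|\le\epsilon_1<\epsilon_2$, so both sides equal $1$), and the paper only ever uses monotonicity in the non-strict sense later on. Your phrasing ``which is the content intended by this item'' is the correct diagnosis.
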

 
%Due Theorem 3.1, $\{a^{n/2} cos(b^n \pi ( \cdot) )\oplus a^{n/2} \sin(b^n \pi ( \cdot)) : n\in\mathbb{N}\}$ is an orthonormal basis (ONB) for $\mathcal{H}_W $, the RKHS associated to the kernel W.
 
 \begin{prop}
 The embedding $I_W : \mathcal{H}_W \longrightarrow C(I)$ satisfies 
 $$
  \| I_W \|^2  = \frac{1}{1-a}.
  $$
 \end{prop}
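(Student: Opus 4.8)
The plan is to compute the operator norm directly from the explicit series representation of $\mathcal{H}_W$ given in Theorem \ref{teo1}. Recall that $\{a^{n/2}\cos(b^n\pi\,\cdot\,),\,a^{n/2}\sin(b^n\pi\,\cdot\,):n\geq 0\}$ is an orthonormal basis of $\mathcal{H}_W$, so that $f\in\mathcal{H}_W$ with expansion $f(x)=\sum_{n\geq0}a^{n/2}[c_n\cos(b^n\pi x)+d_n\sin(b^n\pi x)]$ satisfies $\|f\|_W^2=\sum_{n\geq0}(c_n^2+d_n^2)$. First I would bound $\|f\|_\infty$ from above: for every $x\in I$,
\[
|f(x)|\leq\sum_{n=0}^\infty a^{n/2}\bigl(|c_n|+|d_n|\bigr)
\leq\Bigl(\sum_{n=0}^\infty a^n\Bigr)^{1/2}\Bigl(\sum_{n=0}^\infty(|c_n|+|d_n|)^2\Bigr)^{1/2}
\]
by Cauchy--Schwarz, and since $(|c_n|+|d_n|)^2\leq 2(c_n^2+d_n^2)$ one does \emph{not} immediately get the sharp constant this way; instead the efficient route is to estimate via $|c_n\cos\theta+d_n\sin\theta|\leq\sqrt{c_n^2+d_n^2}$, giving $|f(x)|\leq\sum_{n\geq0}a^{n/2}\sqrt{c_n^2+d_n^2}\leq(\sum_{n\geq0}a^n)^{1/2}(\sum_{n\geq0}(c_n^2+d_n^2))^{1/2}=(1-a)^{-1/2}\|f\|_W$. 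Hence $\|I_W\|^2\leq 1/(1-a)$.

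For the reverse inequality I would exhibit a near-extremal function, or better, use the reproducing kernel identity. The cleanest argument: $\|I_W\|^2=\sup_{x\in I}\|W(x,\cdot)\|_{?}$... more precisely, for the evaluation functional one has $|f(x)|=|\langle f,W(x,\cdot)\rangle_W|\leq\|f\|_W\|W(x,\cdot)\|_W$, and $\|W(x,\cdot)\|_W^2=W(x,x)=w_{a,b}(0)=\sum_{n\geq0}a^n=1/(1-a)$. This shows $\|f\|_\infty\leq(1-a)^{-1/2}\|f\|_W$ again, but to see it is attained (or approached) note that equality in Cauchy--Schwarz forces $f$ proportional to $W(x_0,\cdot)$ for the maximizing $x_0$; taking $x_0=0$, the function $f=W(0,\cdot)=\sum_{n\geq0}a^n\cos(b^n\pi\,\cdot\,)$ has $\|f\|_W^2=1/(1-a)$ and $f(0)=\sum_{n\geq0}a^n=1/(1-a)$, so $\|f\|_\infty\geq f(0)=1/(1-a)=(1-a)^{-1/2}\|f\|_W$. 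Therefore $\|I_W\|^2\geq 1/(1-a)$, and combining the two bounds gives $\|I_W\|^2=1/(1-a)$.

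The only point requiring a little care is the upper bound step where one passes from $|c_n\cos\theta+d_n\sin\theta|\leq\sqrt{c_n^2+d_n^2}$ and then applies Cauchy--Schwarz with weights $a^n$; this is routine but must be written so the constant $1/(1-a)$ emerges exactly rather than with a spurious factor of $2$. There is no genuine obstacle here — the kernel's diagonal value $W(x,x)\equiv w_{a,b}(0)=1/(1-a)$ does all the work, since for an RKHS the embedding norm into $C(I)$ is always $\sup_{x}\sqrt{K(x,x)}$, and here this supremum is constant in $x$.
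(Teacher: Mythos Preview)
Your proof is correct and follows essentially the same approach as the paper: the upper bound via the reproducing property and Cauchy--Schwarz (equivalently, $\|W(x,\cdot)\|_W^2=W(x,x)=1/(1-a)$), and the lower bound by evaluating the extremal function with coefficients $c_n=a^{n/2}$, $d_n=0$ (i.e., $W(0,\cdot)$) at $x=0$. Your additional remark that $\|I_K\|=\sup_x\sqrt{K(x,x)}$ for any bounded RKHS kernel is exactly the general principle the paper cites immediately after its proof.
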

 
\begin{proof}
 By the reproducing property R2, Section \ref{WRKHS}, and an application of the Cauchy-Schwarz inequality we have that
\begin{align*}
\| I_W  \|^2 %& = \sup_{\| f \|_W =1} \| I_W (f) \|_W ^2 \\
       = \sup_{\| f \|_W =1} \sup_{x\in I} |f(x)| ^2 
     \leq % \sup_{\| f \|_W =1} \sup_{x\in I} \| f\|_W ^2 \| W(x,\cdot)\|_W^2 \\
  %& = 
  \sup _{x\in I} \| W(x,\cdot)\|_W  ^2  
  % & = \sup_{x\in I} \langle   W(x, \cdot), W(x, \cdot)\rangle_W \\
 % & =  \sup_{x\in I} W(x,x) \\
   =  \frac{1}{1-a}.
\end{align*}
If we consider $f\in \mathcal{H}_W$,
$$
f(x)= \sum_{n=0}^\infty a^{n/2} \left[c_n \cos(b^n \pi x)) + d_n \sin(b^n \pi x)\right]
$$
then $g(x)= f(x)/\| f\| _W$, $x\in I$, is such that $g\in\mathcal{H}_W$ and $\|g\|_W=1$.\ For $f$ as above and $c_n =a^{n/2}$, $n\in\mathbb{N}$, we have that $g$ at $x=0$ attains the upper bound $1/(1-a)$ and the proof follows. 
\end{proof}

%In particular, we see that the l
Proposition above can be seen as special case of a general case of Lemma 4.23 in \cite{StCh}.\ If  $\mathcal{H}_{K}(X)$ is the reproducing kernel Hilbert space generated by a bounded kernel $K$ on $X$, then 
$$\| I_K:\mathcal{H}_{K}(X) \to \ell_{\infty}(X) \|^2 = \sup_{x\in X}K(x,x).$$

We will need a similar result for certain projection operators, defined as follows.\ Given $N>1$ integer we define $P_{\mathcal{U}}^N$ and $P_{\mathcal{V}}^{N}$ as being the projections onto 
\begin{align*}
\mathcal{U}:=\textrm{span}&\, \left\{ a^ {n/2} \cos (b^n \pi \,\,\cdot \,\,) , a^ {n/2} \sin(b^n \pi\,\,\cdot \,\,) : n= 0, \ldots ,N-1\right\} 
\end{align*}
and
\begin{align*}
\mathcal{V}:=\textrm{span}& \left\{ a^ {n/2} cos (b^n \pi\,\,\cdot \,\,) , a^ {n/2}  \sin(b^n \pi\,\,\cdot \,\,) :  n= N, \ldots ,\infty\right\}  \end{align*}
respectively. 
%so if $f\in\mathcal{H}_{W}$, 
%then $f=P_{\mathcal{U}}^N + P_{\mathcal{V}}^{N}$, that is.

Precisely, if $g\in \mathcal{U}$, then 
$$g(x)=\sum_{n=0}^{N-1}c_{n}(g)a^{n/2}\cos(b^n \pi x)
 + d_{n}(g)a^{n/2}\sin(b^n \pi x)$$
for some real numbers $c_{n}(g)$ and $d_{n}(g)$, for $n\geq 0$.\ And similarly for the space $\mathcal{V}$.  
Thus,  for any $f\in\mathcal{H}_{W}$, we have 
 $$
 f=P_{\mathcal{U}}^N(f) + P_{\mathcal{V}}^{N}(f).
 $$ 

\begin{lem}\label{projnorm} If $P_{\mathcal{U}}^N$ and $P_{\mathcal{V}}^{N}$ are the projections defined above, then 
\[\| I_W P_{\mathcal{U}}^N \|^2 = \frac{1- a^N}{1-a} \quad \mbox{and} \quad \| I_W P_{\mathcal{V}}^{N} \|^2 = \frac{a^N}{1-a}.\]
\end{lem}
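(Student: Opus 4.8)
The plan is to compute each operator norm directly from the reproducing-kernel description of the relevant subspaces, mimicking the argument already used for $\|I_W\|^2 = 1/(1-a)$. The key observation is that $\mathcal{U}$ and $\mathcal{V}$ are themselves reproducing kernel Hilbert spaces: since $\{a^{n/2}\cos(b^n\pi\,\cdot\,),\,a^{n/2}\sin(b^n\pi\,\cdot\,):n\geq 0\}$ is an orthonormal basis of $\mathcal{H}_W$, the subspace $\mathcal{U}$ is the RKHS with kernel
\[
W_N(x,y):=\sum_{n=0}^{N-1}a^n\left[\cos(b^n\pi x)\cos(b^n\pi y)+\sin(b^n\pi x)\sin(b^n\pi y)\right]=\sum_{n=0}^{N-1}a^n\cos(b^n\pi(x-y)),
\]
and $\mathcal{V}$ is the RKHS with kernel $W(x,y)-W_N(x,y)=\sum_{n=N}^\infty a^n\cos(b^n\pi(x-y))$, with the inner products inherited from $\mathcal{H}_W$. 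The reproducing property for these sub-kernels is immediate because truncating an orthonormal expansion is exactly orthogonal projection.

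With that in hand, I would argue exactly as in the preceding proposition. By the remark after that proposition (equivalently Lemma 4.23 in \cite{StCh}, or a direct Cauchy–Schwarz estimate using R2 on $\mathcal{U}$ resp. $\mathcal{V}$), one has
\[
\|I_W P_{\mathcal{U}}^N\|^2=\sup_{x\in I}W_N(x,x)=\sup_{x\in I}\sum_{n=0}^{N-1}a^n=\frac{1-a^N}{1-a},
\]
and likewise $\|I_W P_{\mathcal{V}}^{N}\|^2=\sup_{x\in I}\sum_{n=N}^\infty a^n=\dfrac{a^N}{1-a}$. For the upper bound in each case, note that for $f$ in the relevant subspace, $|f(x)|=|\langle f, W_N(x,\cdot)\rangle_W|\le\|f\|_W\,\|W_N(x,\cdot)\|_W$, and $\|W_N(x,\cdot)\|_W^2=W_N(x,x)$ is constant in $x$; then take the supremum over the unit ball. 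For the matching lower bound, exhibit the extremal function achieving equality: taking $c_n=a^{n/2}$ for $0\le n\le N-1$ (and the rest zero) in $\mathcal{U}$ gives a unit-norm function whose value at $x=0$ is $\sum_{n=0}^{N-1}a^n$, so $\|I_WP_{\mathcal U}^N\|^2\ge (1-a^N)/(1-a)$; similarly $c_n=a^{n/2}$ for $n\ge N$ in $\mathcal{V}$ evaluated at $x=0$ realizes $a^N/(1-a)$. (One must check these choices give genuinely finite-norm elements of the respective spaces — for $\mathcal U$ it is a finite sum, for $\mathcal V$ the coefficient sequence $(a^{n/2})_{n\ge N}$ lies in $\ell^2$ since $0<a<1$.)

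The only mild subtlety — and the one place I would be careful — is verifying that the constant value $W_N(x,x)=\sum_{n=0}^{N-1}a^n$ is attained, not merely bounded, as a value of some unit-norm function; i.e.\ that the supremum defining the operator norm is a maximum. This is handled by the explicit extremal functions above, whose evaluation at $x=0$ makes every cosine term equal to $1$ and every sine term equal to $0$, so the Cauchy–Schwarz inequality in the estimate $|f(0)|\le\|f\|_W\|W_N(0,\cdot)\|_W$ becomes an equality (the two vectors are parallel). Everything else is a routine geometric-series computation, and the decomposition $I_W = I_WP_{\mathcal U}^N + I_WP_{\mathcal V}^N$ with these two norms is precisely what Subsection~3.1 will feed into Lemma~\ref{LemCN}(1)–(3) to bound $\mathcal{C}(\epsilon,I_W)$.
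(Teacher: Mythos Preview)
Your approach is essentially the same as the paper's: Cauchy--Schwarz via the reproducing property of the truncated kernel $W_N$ gives the upper bound, and the explicit test function with coefficients $c_n=a^{n/2}$ (and $d_n=0$) evaluated at $x=0$ gives the matching lower bound; the paper likewise omits the analogous computation for $P_{\mathcal V}^N$. One small slip: with $c_n=a^{n/2}$ the resulting function has $\|\cdot\|_W^2=\sum_{n=0}^{N-1}a^n$, not $1$, so it is not unit-norm as you state; after normalizing (as the paper does with $\widetilde g=g/\|g\|_W$), its value at $0$ squared equals $(1-a^N)/(1-a)$, and the conclusion stands.
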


\begin{proof} We first observe that 
$$
\| I_W P_{\mathcal{U}}^N \|^2  =  \sup_{\| g \|_W =1} \sup_{x\in I} | g(x) | ^2,
$$
and the right-hand side o equality above is upper bounded by
\begin{align*}
                   \sup_{\| g \| _W  =1} \sup _{x\in I}\left(\sum_{n=0}^{N-1} {c_n(g)}^2 +{d_n(g)}^2 \right) \| W_N (x,\cdot)\|_W  ^2
                   &  \leq %\sup_{x\in I} W_N (x,x)\\
                   %& =
                   \sum_{n=0}^ {N-1} a^n.
\end{align*}
Thus, 
\begin{align*}
\| I_W P_{\mathcal{U}}^N \|^2 & =  \sup_{\| g \|_W =1} \sup_{x\in I} | g(x) | ^2
                    \leq %\sup_{x\in I} W_N (x,x)\\
                   %& =
                     \frac{1- a^N}{1-a}.
\end{align*}
Analogously as proof above, if we consider $\widetilde{g}(x) =g(x)/\| g\|_W$, $x\in I$, with $g\in \mathcal{U}$ and $c_n(g) = a^{n/2}$, then 
$$
[\widetilde{g} (0)]^2 = \frac{1- a^N }{1-a} \leq {\| I_W P_{\mathcal{U}}^N \|}^2 \leq \frac{1- a^N}{1-a},
$$
and we finish the proof of 
$$
\| I_W P_{\mathcal{U}}^N \|^2 = \frac{1- a^N}{1-a}.
$$

The proof of the another equality is completely analogous to the one we just presented the proof and we choose to omit it here. 
\end{proof} 

From now on, for each $N \geq 1$, we will employ the notation 
$$\mu_N := \left( \frac{a^N}{1-a}\right)^{1/2}.$$

\begin{thm}\label{tma4.4}
For any $\epsilon > 0$, there exist a positive constant $M_1$ such that  $$\ln (\mathcal{C} (2\epsilon, I_W))\leq  \frac{M_1}{\ln(1/a)}\left\{  \ln\left[ \left(\frac{1}{1-a}\right)^{1/2}\frac{1}{\epsilon}\right]\right\}^2  .$$
\end{thm}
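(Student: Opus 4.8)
The strategy is to bound the covering number of $I_W$ by splitting the embedding into a finite-rank part and a small-norm part via the projections from Lemma~\ref{projnorm}. For a parameter $N$ to be chosen, write $I_W = I_W P_{\mathcal{U}}^N + I_W P_{\mathcal{V}}^N$. By item~(1) of Lemma~\ref{LemCN} we have $\mathcal{C}(2\epsilon, I_W) \le \mathcal{C}(\epsilon, I_W P_{\mathcal{U}}^N)\,\mathcal{C}(\epsilon, I_W P_{\mathcal{V}}^N)$. The first plan step is to pick $N = N(\epsilon)$ so that $\|I_W P_{\mathcal{V}}^N\| = \mu_N = (a^N/(1-a))^{1/2} \le \epsilon$; by Lemma~\ref{LemCN}(3) this forces $\mathcal{C}(\epsilon, I_W P_{\mathcal{V}}^N) = 1$, leaving only the finite-rank factor. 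Solving $a^N/(1-a) \le \epsilon^2$ gives $N \ge \ln\!\big((1-a)^{-1}\epsilon^{-2}\big)/\ln(1/a)$, so I would take $N := \big\lceil 2\ln\!\big((1-a)^{-1/2}\epsilon^{-1}\big)/\ln(1/a)\big\rceil$, which is $\asymp \ln\!\big((1-a)^{-1/2}\epsilon^{-1}\big)/\ln(1/a)$.

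\textbf{Key steps.} Next, estimate $\mathcal{C}(\epsilon, I_W P_{\mathcal{U}}^N)$ using Lemma~\ref{LemCN}(2): since $\operatorname{rank}(I_W P_{\mathcal{U}}^N) = 2N$ (the span $\mathcal{U}$ is spanned by $2N$ functions) and $\|I_W P_{\mathcal{U}}^N\|^2 = (1-a^N)/(1-a) \le 1/(1-a)$ by Lemma~\ref{projnorm}, we get
\[
\mathcal{C}(\epsilon, I_W P_{\mathcal{U}}^N) \le \Big(1 + \tfrac{2}{\epsilon}(1-a)^{-1/2}\Big)^{2N}.
\]
Taking logarithms, $\ln \mathcal{C}(2\epsilon, I_W) \le 2N \ln\!\big(1 + 2(1-a)^{-1/2}\epsilon^{-1}\big)$. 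For $\epsilon$ small the argument of the logarithm is dominated by a constant multiple of $(1-a)^{-1/2}\epsilon^{-1}$, so $\ln\!\big(1 + 2(1-a)^{-1/2}\epsilon^{-1}\big) \le C \ln\!\big((1-a)^{-1/2}\epsilon^{-1}\big)$ for a suitable constant and $\epsilon$ below some threshold. Substituting the chosen $N$ then yields
\[
\ln \mathcal{C}(2\epsilon, I_W) \le \frac{M_1}{\ln(1/a)}\Big\{\ln\!\big[(1-a)^{-1/2}\epsilon^{-1}\big]\Big\}^2,
\]
which is the claimed bound; absorbing the various constants into $M_1$ finishes the argument. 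One should handle the range of large $\epsilon$ separately — there $\mathcal{C}(2\epsilon, I_W)$ is eventually $1$ (once $2\epsilon \ge \|I_W\|$) or else bounded, so the inequality holds trivially after enlarging $M_1$, and in any case the statement is an asymptotic one as $\epsilon \to 0^+$.

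\textbf{Main obstacle.} The routine part is the algebra; the only genuine care needed is the interplay between the two $\epsilon$-dependencies — the exponent $2N$ grows like $\ln(1/\epsilon)$ and the base grows like $1/\epsilon$, and one must verify that $N(\epsilon)$ chosen from the tail-norm condition is simultaneously large enough to kill $\mathcal{C}(\epsilon, I_W P_{\mathcal{V}}^N)$ and not so large that it spoils the $\{\ln(1/\epsilon)\}^2$ order. Tracking the dependence on $a$ (through both $\ln(1/a)$ in $N$ and $(1-a)^{-1/2}$ inside the logarithm) so that it matches the precise form in the statement is the one place where a sloppy estimate would give the wrong constant structure; keeping $N$ exactly proportional to $\ln\!\big((1-a)^{-1/2}\epsilon^{-1}\big)/\ln(1/a)$ rather than, say, $\ln(1/\epsilon)/\ln(1/a)$ is what makes the bound sharp enough to pair with the lower bound later.
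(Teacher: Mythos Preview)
Your proposal is correct and follows essentially the same route as the paper: split $I_W = I_W P_{\mathcal{U}}^N + I_W P_{\mathcal{V}}^N$, choose $N=N(\epsilon)$ so that $\mu_N\le\epsilon$ kills the tail factor via Lemma~\ref{LemCN}(3), and bound the finite-rank part with Lemma~\ref{LemCN}(2) together with Lemma~\ref{projnorm}. The only notable difference is that you (correctly) take $\operatorname{rank}(I_W P_{\mathcal{U}}^N)=2N$, whereas the paper writes the exponent as $N$; this factor of $2$ is absorbed into $M_1$ and does not affect the argument.
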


\begin{proof} Since $\mu^2_N = a^N/(1-a)$, for each $N\geq 1$, we have that  $(1-a) \mu_N^2 = a^{N}$  and that 
\begin{equation} \label{4.5}
    \frac{\ln (  (1-a) \mu_N^2 )}{\ln a} = N.
\end{equation} 
Now, we consider $\epsilon >0$ and observe that there exists a natural number $N=N(\epsilon)$ such that 
\begin{equation}\label{MuNEpsilon}
    \mu_N \leq \epsilon< \mu_{N-1}.
\end{equation}
If $\epsilon \rightarrow 0^+$,  then $\mu_N \rightarrow 0$ and it follows that

\begin{equation}N(\epsilon) \approx \frac{\ln({\epsilon}^2 (1-a))}{\ln a}.
\end{equation}
From Lemma \ref{projnorm} we have $\| I_W P_{\mathcal{V}}^{N} \| = \mu_N$ and an application of Lemma \ref{LemCN}, item 3, we obtain 
$$
\mathcal{C}(\mu_N ,I_W P_{\mathcal{V}}^{N} )=1.
$$
Lemma \ref{LemCN}, item 2, implies that 
\begin{eqnarray*} 
\mathcal{C} (\epsilon, I_W P_{\mathcal{U}}^N ) & \leq & \left(1+ \frac{2\| I_W P_{\mathcal{U}}^N \| }{ \epsilon} \right)^ {rank(I_W P_{\mathcal{U}}^N )}\\ & \leq & \left( 1+\frac{{2}}{ {\epsilon }} \left( \frac{{1-a}^N}  {1-a} \right)^{1/2}\right)^N .
\end{eqnarray*}
By the formula (\ref{MuNEpsilon}), we obtain that
\begin{eqnarray*} 
 \left( 1+\frac{{2}}{ {\epsilon }} \left( \frac{{1-a}^N}  {1-a} \right)^{1/2}\right)^N & = & \left[ 1+ \frac{2}{\epsilon} \left( \frac{1}{1-a} - {{{ \mu_N ^2 }}}\right)^{1/2} \right]^ N\\ 
                             & < & \left[ 1+ \frac{2}{\epsilon} \left( \frac{1}{1-a} \right)^{1/2} \right]^ N. 
\end{eqnarray*}
Combining these estimates, due Lemma \ref{LemCN}, we have
\begin{eqnarray*} 
\mathcal{C}(2\epsilon, I_W  ) \leq \mathcal{C} (\epsilon, I_W P_{\mathcal{U}}^N ) <  \left[ 1+ \frac{2}{\epsilon} \left( \frac{1}{1-a} \right)^{1/2} \right]^ N.
\end{eqnarray*}
Thus, 
\begin{align*}
    \ln(\mathcal{C} (2\epsilon , I_W ))  & < N  \ln \left[1+ \frac{2}{\epsilon} \left( \frac{1}{1-a} \right)^{1/2} \right] \\
      %&= N \ln \left[  1+ \frac{2}{\epsilon (1-a)^{1/2}}\right] \\
      %& \leq  \frac{\ln \left( \mu_N ^2 {(1-a)}\right)}{\ln a} \ln \left( 4\left( \frac{1}{1-a} \right)^{1/2} \frac{1}{\epsilon} \right)\\
      %& = \frac{\ln\left(\frac{1}{(1-a)\mu_N^2}\right)} {\ln \left(\frac{1}{a}\right)}  \ln \left( 4\left( \frac{1}{1-a} \right)^{1/2} \frac{1}{\epsilon} \right)\\
      %& = \frac{2}{\ln{\frac{1}{a}}} \ln \left( \frac{1}{\mu_N (1-a)^{1/2}}\right) \ln \left( 4\left( \frac{1}{1-a} \right)^{1/2} \frac{1}{\epsilon} \right)\\
      & \leq  \frac{2M}{\ln{(1/a)}} \ln \left( \frac{1}{\epsilon (1-a)^{1/2}}\right)   \ln \left( \frac{4}{\epsilon ({1-a})^{1/2}} \right),
    \end{align*}
for some constant $M>0$.\ 
%We also have that $1/\epsilon (1-a)^{1/2} \geq 4$, and it implies that
%$$
%\ln\left(\frac{4}{\epsilon ({1-a})^{1/2}} \right)\leq %\ln\left(\frac{1}{\epsilon^2 (1-a)}\right).
%$$
Hence, for every $\epsilon>0$ sufficiently small it follows the estimate
\begin{align*}
\ln (\mathcal{C}(2\epsilon, I_W)) &\leq %\frac{2M}{\ln (\frac{1}{a}) }  \ln \left( \frac{1}{\epsilon (1-a)^{1/2}}\right)  \ln \left( \frac{1}{\epsilon^2  (1-a)}\right)\\
    %&= \frac{4M}{\ln (\frac{1}{a}) } \left\{ \ln \left( \frac{1}{\epsilon (1-a)^{1/2}}\right) \right\}^2 \\
    %&= 
    \frac{M_1}{\ln (1/a) } \left[ \ln \left( \frac{1}{\epsilon (1-a)^{1/2}}\right) \right]^2, 
\end{align*}
and we finish the proof.
\end{proof}
\subsection{Lower bounds}

In order to  present lower bounds for 
%the numbers
$\mathcal{C} (\epsilon, I_K )$, we first introduce the following  general result.
%the covering numbers  of operators. 

 \begin{lem}\label{CN1}\cite[Lemma 1]{Kuhn}
Let $X$ and $Y$ two n-dimensional real Banach spaces and  $T: X  \rightarrow Y$ be an operator. If  $\epsilon>0 $ then  
\begin{equation}
\mathcal{C}(\epsilon,T) \geq |\det \sqrt{T^{*}T}| \left(\frac{1}{\epsilon}\right) ^n.
\end{equation}
\end{lem}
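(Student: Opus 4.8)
The plan is to reduce the lower bound for the covering number to a volume comparison argument in $\mathbb{R}^n$. First I would observe that since $X$ and $Y$ are $n$-dimensional real Banach spaces, the operator $T\colon X\to Y$ has an adjoint $T^*\colon Y^*\to X^*$, but the statement is really about the ``volume ratio'' of the image ellipsoid (or more precisely the image body $T(B_X)$) relative to $B_Y$. The key device is to equip each space with an auxiliary Euclidean structure: choose inner products on $X$ and $Y$ so that $T^*T$ is defined, and recall that $|\det\sqrt{T^*T}|$ is precisely the factor by which $T$ scales Lebesgue measure, i.e. $\mathrm{vol}(T(B_X)) = |\det\sqrt{T^*T}|\,\mathrm{vol}(B_X)$ when $B_X, B_Y$ are the associated Euclidean balls — and more generally this determinant is the Jacobian of $T$ with respect to the chosen Haar/Lebesgue measures.

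The core step is the standard volumetric lower bound on covering numbers: if $T(B_X)\subset \bigcup_{j=1}^{\mathcal{C}} (y_j + \epsilon B_Y)$, then taking volumes gives
\[
\mathrm{vol}(T(B_X)) \;\le\; \mathcal{C}(\epsilon,T)\cdot \mathrm{vol}(\epsilon B_Y) \;=\; \mathcal{C}(\epsilon,T)\,\epsilon^{n}\,\mathrm{vol}(B_Y).
\]
Rearranging yields
\[
\mathcal{C}(\epsilon,T) \;\ge\; \frac{\mathrm{vol}(T(B_X))}{\mathrm{vol}(B_Y)}\left(\frac{1}{\epsilon}\right)^{n}.
\]
Then I would identify the volume ratio $\mathrm{vol}(T(B_X))/\mathrm{vol}(B_Y)$ with $|\det\sqrt{T^*T}|$: writing $T$ in coordinates adapted to orthonormal bases of $X$ and $Y$, the linear image $T(B_X)$ of the unit ball has volume $|\det T|\cdot\mathrm{vol}(B_X)$, and with $\mathrm{vol}(B_X)=\mathrm{vol}(B_Y)$ (both unit Euclidean balls in $\mathbb{R}^n$) this reduces to $|\det T| = \sqrt{\det(T^*T)} = |\det\sqrt{T^*T}|$, using that $T^*T$ is positive semidefinite so its square root is well-defined and has nonnegative determinant.

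The main obstacle — really the only subtlety — is that $X$ and $Y$ carry \emph{Banach} norms, not Hilbert norms, so the quantities $T^*$, $\sqrt{T^*T}$, and $\det\sqrt{T^*T}$ need an unambiguous meaning, and one must check that the volume inequality above is insensitive to which Lebesgue measure (i.e. which linear identification with $\mathbb{R}^n$) is used, since the ratio $\mathrm{vol}(T(B_X))/\mathrm{vol}(\epsilon B_Y)$ is scale-invariant under a common choice of Haar measure on the target. I would handle this by fixing the Lebesgue measure on $Y$ once and for all, pulling it back through $T$ (when $T$ is invertible; the degenerate case $\det = 0$ makes the bound trivial), and noting that the determinant factor that appears is exactly $|\det\sqrt{T^*T}|$ computed with respect to the chosen Euclidean structures — this is the content of \cite[Lemma 1]{Kuhn}, which I would cite for the bookkeeping and reproduce only the volumetric heart of the argument.
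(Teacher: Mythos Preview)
The paper does not supply its own proof of this lemma: it is stated with the citation \cite[Lemma 1]{Kuhn} and then immediately used in Theorem~\ref{LB}, so there is nothing to compare your argument against line by line. Your volumetric argument is the standard one and is essentially what underlies the cited result: cover $T(B_X)$ by $\mathcal{C}(\epsilon,T)$ translates of $\epsilon B_Y$, take $n$-dimensional volume, and rearrange.

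One comment on the subtlety you flag. As written, the lemma speaks of arbitrary $n$-dimensional \emph{Banach} spaces, and you are right that $T^{*}T$ and its square root only make literal sense once Euclidean structures are fixed on $X$ and $Y$. Your identification $\mathrm{vol}(T(B_X))/\mathrm{vol}(B_Y)=|\det\sqrt{T^{*}T}|$ uses $\mathrm{vol}(B_X)=\mathrm{vol}(B_Y)$, which is exactly the Hilbert (Euclidean) case; for genuinely different Banach unit balls this ratio would acquire an extra factor $\mathrm{vol}(B_X)/\mathrm{vol}(B_Y)$. In the paper the lemma is only ever applied with $E_n\subset\mathcal{H}_W$ and $F_n\subset L^2(I)$, both finite-dimensional Hilbert spaces, so your Euclidean version is precisely what is needed and your proof sketch is adequate for the use made of the lemma here.
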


%\begin{color}{red}
%O que estÃ¡ aqui dentro destas linhas pode ser omitido (mas talvez seja bom deixar detalhado se for incluir em sua tese).
%Sua observacao abaixo faz sentido: a propriedade nao Ã© a (i) mas Ã© na verdade a seguinte:
%$$\mathcal{C}(\delta\epsilon, RT)\leq \mathcal{C}(\delta, R)\mathcal{C}(\epsilon, T).$$
%Usa-se tbm a propriedade de que transformaÃ§Ãµes lineares ortogonais entre Hilbert's sÃ£o isomÃ©tricas, dai vc vÃª que o covering number Ã© mantido (explicacao grosseira, mas acho que dÃ¡ para pegar a ideia)
%\end{color}

%with $e_i (x)=  a^{i/2}  \cos (b^i \pi x) \oplus  a^{i/2}  \sin(b^i \pi x) $ 
%we can write $\mathcal{H}_W $ as 

% $$\mathcal{H}_W = \oplus_{i=0}^\infty \mathcal{H}_c^i \oplus \mathcal{H}_s^i $$ with $\mathcal{H}_c^i =\{ f_c^i :[-1,1] \rightarrow \mathbb{R};f_c^i = a^{i/2} c_i \cos (b^i \pi x), c_i \in l_2 \}$ and $\mathcal{H}_s^i = \{ f_s^i :[-1,1] \rightarrow \mathbb{R};f_s^i = a^{i/2} d_i \sin (b^i \pi x), d_i \in l_2 \}$.

%%$$\mathcal{H}_W = \mathcal{H}_c \oplus \mathcal{H}_s$$  with
%$\mathcal{H}_c =\left\{ \sum_{i=0}^{\infty} a^{i/2} c_i \cos(b^i \pi x); \{c_i \} \in {l}_2 \right\}$
%and $\mathcal{H}_s = \left\lbrace \sum_{i=0}^{\infty} a^{i/2} d_i \sin(b^i \pi x); \{d_i \} \in {l}_2  \right\rbrace$.

%The main result here is the following characterization of the 

A lower bound for $\mathcal{C} (\epsilon, I_W)$ is presented in the following theorem.

\begin{thm}\label{LB}
For any $\epsilon > 0$, there exist a positive constant $M_2$ such that  
$$
\ln (\mathcal{C} (\epsilon, I_W))\geq  \frac{M_2}{\ln\left(1/a\right)}\ln\left(1/(1-a)^{1/2}\epsilon\right) \ln\left( \frac{1}{\epsilon}\right).
$$
\end{thm}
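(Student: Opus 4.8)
The plan is to reduce, exactly as in the upper bound argument (Theorem \ref{tma4.4}), to a finite-rank diagonal operator and then invoke the volumetric lower bound of Lemma \ref{CN1}. As remarked before the statement it suffices to bound from below the covering numbers of $I_W$ viewed as a map into $L^2(I)$, since $\|\cdot\|_2\le\|\cdot\|_\infty$ on $I$. For $\epsilon>0$ small let $N=N(\epsilon)$ be the integer fixed by $\mu_N\le\epsilon<\mu_{N-1}$ (the same $N$ as in Theorem \ref{tma4.4}), so that $a^N\le\epsilon^2(1-a)<a^{N-1}$. Consider the $2N$-dimensional subspace $\mathcal U=\mathcal U_N$ from Lemma \ref{projnorm} and the operator $T:=I_W|_{\mathcal U}$ regarded as a map from $(\mathcal U,\langle\cdot,\cdot\rangle_W)$ to $(\mathcal U,\langle\cdot,\cdot\rangle_2)$; both are $2N$-dimensional real Banach spaces. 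Since $P^N_{\mathcal U}(B_{\mathcal H_W})=B_{\mathcal U}$ one has $T(B_{\mathcal U})\subseteq I_W(B_{\mathcal H_W})$, and composing any $L^2$-cover of $I_W(B_{\mathcal H_W})$ with the orthogonal projection of $L^2(I)$ onto $\mathcal U$ (which does not increase distances to points of $\mathcal U$) yields a cover of $T(B_{\mathcal U})$ inside $(\mathcal U,\|\cdot\|_2)$ of the same cardinality; hence $\mathcal C(\epsilon,T)\le\mathcal C(\epsilon,I_W)$.

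Next I would compute $\det\sqrt{T^{*}T}$. The family $\{a^{n/2}\cos(b^n\pi\,\cdot\,),a^{n/2}\sin(b^n\pi\,\cdot\,):0\le n\le N-1\}$ is orthonormal in $\mathcal H_W$, and $T$ sends each of these vectors to itself. Since $b$ is an integer $\ge 2$, the frequencies $1,b,\dots,b^{N-1}$ are distinct positive integers, so the images are pairwise orthogonal in $L^2(I)$ with $\|a^{n/2}\cos(b^n\pi\,\cdot\,)\|_2=\|a^{n/2}\sin(b^n\pi\,\cdot\,)\|_2=a^{n/2}$. Thus $T$ is diagonal, $T^{*}T$ has eigenvalues $a^n$ ($0\le n\le N-1$), each of multiplicity two, and $\det\sqrt{T^{*}T}=\prod_{n=0}^{N-1}a^{n}=a^{N(N-1)/2}$. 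Applying Lemma \ref{CN1} in dimension $2N$ gives $\mathcal C(\epsilon,T)\ge a^{N(N-1)/2}\epsilon^{-2N}$, so
\[
\ln\mathcal C(\epsilon,I_W)\ \ge\ 2N\ln(1/\epsilon)-\frac{N(N-1)}{2}\ln(1/a).
\]

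Finally I would insert the chosen $N$. Writing $B:=\ln\!\big(1/(\epsilon(1-a)^{1/2})\big)$, the inequalities $a^{N-1}>\epsilon^2(1-a)$ and $a^N\le\epsilon^2(1-a)$ become $(N-1)\ln(1/a)<2B\le N\ln(1/a)$. Hence $\tfrac{N-1}{2}\ln(1/a)<B$, so the right-hand side above is $>N\big(2\ln(1/\epsilon)-B\big)$, and since $2\ln(1/\epsilon)-B=\ln(1/\epsilon)-\tfrac12\ln\!\big(1/(1-a)\big)\ge\tfrac12\ln(1/\epsilon)$ for $\epsilon$ small (namely $\epsilon\le 1-a$), together with $N\ge 2B/\ln(1/a)$ we obtain
\[
\ln\mathcal C(\epsilon,I_W)\ \ge\ \frac{2B}{\ln(1/a)}\cdot\frac12\ln(1/\epsilon)\ =\ \frac{1}{\ln(1/a)}\,\ln\!\Big(\tfrac{1}{\epsilon(1-a)^{1/2}}\Big)\,\ln\!\Big(\tfrac1\epsilon\Big),
\]
which is the asserted bound with $M_2=1$. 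The only genuinely delicate point is the second step — verifying that the restricted embedding is exactly diagonal with singular values $a^{n/2}$, so that the Gram determinant equals $a^{N(N-1)/2}$; the rest is the same scheme as the upper bound plus careful bookkeeping of the signs of $\ln a$ and $\ln\epsilon$ when substituting $N(\epsilon)$.
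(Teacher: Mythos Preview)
Your proof is correct and follows the same scheme as the paper: factor through the finite-dimensional subspace $\mathcal U=\mathcal U_N$, apply the volumetric lower bound of Lemma~\ref{CN1}, and choose $N=N(\epsilon)$ exactly as in Theorem~\ref{tma4.4}. The one point of divergence is the determinant step. The paper simply asserts that the representing matrix $A_n$ of $T_n^{*}T_n$ satisfies $\det A_n=1$, which yields $\mathcal C(\epsilon,I_W)\ge\epsilon^{-n}$ immediately; you instead compute the singular values of the restricted embedding explicitly as $a^{n/2}$ (each with multiplicity two), obtain $\det\sqrt{T^{*}T}=a^{N(N-1)/2}$, and then absorb the extra term $\tfrac{N(N-1)}{2}\ln(1/a)$ using the inequality $(N-1)\ln(1/a)<2B$. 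Your route is more explicit about where the eigenvalues come from and makes the bookkeeping with $N(\epsilon)$ fully visible, at the cost of one additional estimation step; the paper's route is shorter but leaves the claim $\det A_n=1$ unexplained. Either way the final inequality is the same.
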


\begin{proof}

Let $E:= \{  \psi _k : k\in \mathbb{N}\cup \{0\}\}$ be the orthogonal basis of $\mathcal{H}_W$, given by 
$\{ \cos(b^n\pi(\, \cdot \,)), \sin(b^n\pi(\, \cdot \,)),\,\,n\in\mathbb{N}\cup \{0\}\}$, as indicated in  Theorem \ref{teo1}. \ Also, consider for $n\in\mathbb{N}$ the operator, defined by the following composition
$$
T_n : E_n \stackrel{ J_n }{\longrightarrow} \mathcal{H}_W  \stackrel{I_W}{\longrightarrow} L^2 ( I) \stackrel{P_n }{\longrightarrow} F_n,
$$
where $J_n$ is the embedding of $E_n :=$ span $\{\psi_k : k=0,..,n-1 \}$
%\hookrightarrow 
 into $ \mathcal{H}_W $ and
$P_n $ is the orthogonal projection on $F_n := I_W(J_n(E_n))$.\ The operator $T_n :E_n \rightarrow F_n $ is uniquely determined by the following relation %\begin{color}{blue}eu n\~{a}o entendi a seguinte passagem\end{color}
$$
T_n \psi_k (x) = \psi_k (x),\quad k=0,1,\ldots,n-1.
$$  

If $A_n$
%$=(a_{i,j})_{i,j}^{n-1}$ 
is the representing matrix of the operator $T_n ^* T_n: E_n \rightarrow E_n $  with respect to the ONB $\{ \psi_0 , \psi_1 ,...,\psi_{n-1} \}$ of $E_n $, then $$\det A_n = 1, \quad n\geq 1.$$
%we can explicitly compute its determinant as follows.\ The entries of $A_n $ are  given by
%\begin{eqnarray*}
%a_{i,j}= \langle \psi_i , T_n^* T_n \psi_j \rangle_{2} &=& \langle T_n \psi_i , T_n \psi_j \rangle_{2} \\ &=&  \langle  \psi_i , \psi_j \rangle_{2} \\ & = & \begin{cases}

  %            1 , & \mbox{if} \, i=j;  \\

%          0, & \mbox{if} \, i\neq j,
%       \end{cases} 
%\end{eqnarray*} then

From Lemma \ref{LemCN}, for any $\epsilon>0$, the following holds 
\begin{eqnarray*}
\mathcal{C} (\epsilon, T_n )=\mathcal{C} (\epsilon, P_n  I_W J_n ) & \leq & \mathcal{C} (1, P_n )\mathcal{C} (\epsilon, I_W )\mathcal{C} (1, J_ n )\\ & = & \mathcal{C} (\epsilon, I_W ).
\end{eqnarray*}
Now, Lemma \ref{CN1} implies
$$\mathcal{C} (\epsilon, I_W ) \geq \mathcal{C} (\epsilon, T_n  )\geq \sqrt{ \det (T_n ^* T_n )}\left(\frac{1}{\epsilon}\right)^n. 
$$
Finally, we observe that for $\epsilon>0$ sufficiently small, we can choose 
$$n\geq n_{\epsilon,a}= [\ln({\epsilon}^2 (1-a))]/\ln a,
$$ 
and we obtain,
 \begin{align*}
 \ln (\mathcal{C}(\epsilon, I_W ) &\geq %\ln \left( \epsilon^{-n}\right) \\
  %&= n \ln\left( \epsilon^{-1}\right) \\
  %&\geq\frac{\ln((1-a)\epsilon^2 )}{\ln{a}} \ln\left( \frac{1}{\epsilon}\right)\\
  %&= 
  \frac{M_2}{\ln(1/a)} {\ln\left(\frac{1}{(1-a)^{1/2}\epsilon }\right)} \ln\left( \frac{1}{\epsilon}\right).
  \end{align*} for some constant $M_2 >0$.
\end{proof}

\subsection{Proof of Theorem \ref{coveringesatimates}}

We close this section presenting the proof of our main result.

\begin{proof} We first observe that 
$$
\ln\left( \frac{1}{\epsilon}\right) \approx  \ln\left(\frac{1}{(1-a)^{1/2}\epsilon }\right),  \quad\mbox{as} \quad \epsilon\to 0^{+}.
$$
Theorem \ref{LB} implies 
 %\[ \frac{M_2}{\ln(1/a)} {\ln\left(\frac{1}{(1-a)^{1/2}\epsilon }\right)} \ln\left( \frac{1}{\epsilon}\right)\leq\ln (\mathcal{C} (\epsilon, I_W)).\] 
%Since %for any $c>0$ $ \ln\left( \frac{1}{\epsilon}\right)\sim \ln (c \frac{1}{\epsilon}) $ for $c>0,$
%$\ln\left( \frac{1}{\epsilon}\right) \sim  \ln\left(\frac{1}{(1-a)^{1/2}\epsilon }\right)$, there exists $M_{2}>0$ for which
%and therefore

%\begin{color}{red}
%pois
%$\lim_{\epsilon\rightarrow 0^+ }  \frac{\ln c \frac{1}{\epsilon}}{\ln \frac{1}{\epsilon}}= \lim_{\epsilon\rightarrow 0^+ } \frac{\ln c +\ln \frac{1}{\epsilon}}{\ln \frac{1}{\epsilon}}=\lim_{\epsilon\rightarrow 0^+ } \left(\frac{\ln c }{\ln \frac{1}{\epsilon}} +1 \right)=1$
%\end{color}
\[ \frac{M_2}{\ln(1/a)} \left\{{\ln\left(\frac{1}{(1-a)^{1/2}\epsilon }\right)} \right\}^2 \leq \ln (\mathcal{C} (\epsilon, I_W)),\]
for some $M_{2}>0$.\ Since \[ \ln (\mathcal{C} (\epsilon, I_W)) \asymp  \ln (\mathcal{C} (2\epsilon, I_W)),\quad \mbox{as} \quad  \epsilon\rightarrow 0^+,\] 
an application of Theorem \ref{tma4.4} leads us to both
\begin{eqnarray*} 
\frac{M_2}{\ln(1/a)} \left\{{\ln\left(\frac{1}{(1-a)^{1/2}\epsilon }\right)} \right\}^2 & \leq & \ln (\mathcal{C} (\epsilon, I_W)),
\end{eqnarray*}
and
\begin{eqnarray*} 
\ln (\mathcal{C} (\epsilon, I_W)) & \leq & \frac{M_1}{\ln(1/a)}\left\{  \ln\left[ \left(\frac{1}{1-a}\right)^{1/2}\frac{1}{\epsilon}\right]\right\}^2,
\end{eqnarray*}
and the proof follows.\end{proof}

\section{Concluding remarks}\label{CRemarks}

In this paper, we have introduced  and investigated the  Weierstrass continuous nowhere differentiable kernel $W$ given in \eqref{WK}. The characterization of the RKHS $\mathcal{H}_{W}$ induced by $W$,   presented in Theorem \ref{teo1},    shows that the elements have a  Fourier-like expansion nature inherited by the  kernel alongside  the nowhere differentiability, which is manifested for 
%"almost every function"
a dense subset of  this space. 

Even though the  notion of $\epsilon$-entropy (covering numbers, for instance) play an important role in several areas,
exact asymptotic behaviour of such  entities has been successfully determined only for a few infinite-dimensional spaces. Finally, the (sharp) estimates obtained for the covering numbers in this paper are  independent of the parameter $b$, which, somehow,  indicates  that  the periodicity  of  $W$ does not  affect metric entropy related to $\mathcal{H}_{W}$. 

%\begin{acknowledgements}
%If you'd like to thank anyone, place your comments here
%and remove the percent signs.
%\end{acknowledgements}

% Authors must disclose all relationships or interests that 
% could have direct or potential influence or impart bias on 
% the work: 
%
% \section*{Conflict of interest}
%
% The authors declare that they have no conflict of interest.

% BibTeX users please use one of
%\bibliographystyle{spbasic}      % basic style, author-year citations
%\bibliographystyle{spmpsci}      % mathematics and physical sciences
%\bibliographystyle{spphys}       % APS-like style for physics
%\bibliography{}   % name your BibTeX data base

% Non-BibTeX users please use

\end{document}